\def\today{${\scriptscriptstyle\number\day-\number\month-\number\year}$}
\newtheorem{theorem}{Theorem}[section]
\newtheorem*{thA}{Theorem A}
\newtheorem*{thB}{Theorem B}
\newtheorem{lemma}[theorem]{Lemma}
\newtheorem*{assA}{Assumption A}
\theoremstyle{definition}
\newtheorem*{reC}{Remark C}
\newtheorem{remark}[theorem]{Remark}
\def\address#1{{\center{#1}}}
\date{}
\def\m@th{\mathsurround=0pt}
\def\eqal#1{\null\,\vcenter{\openup\jot\m@th
 \ialign{\strut\hfil$\displaystyle{##}$&&$\displaystyle{{}##}$\hfil
 \crcr#1\crcr}}\,}
\def\matrix#1{\null\,\vcenter{\normalbaselines\m@th
 \ialign{\hfil$##$\hfil&&\quad\hfil$##$\hfil\crcr
 \mathstrut\crcr\noalign{\kern-\baselineskip}
 #1\crcr\mathstrut\crcr\noalign{\kern-\baselineskip}}}\,}
\def\N{{\Bbb N}}
\def\R{{\Bbb R}}
\def\divv{{\rm div}\,}
\def\rot{{\rm rot}\,}
\def\const{{\rm const}}
\def\esssup{\mathop{\rm esssup}\limits}
\numberwithin{equation}{section}
\title{On regular periodic solutions to the Navier-Stokes equations}
\author{Wojciech M. Zaj\c{a}czkowski}
\begin{document}
\input amssym.def
\input amssym.tex
\maketitle
\thispagestyle{fancy}

\address{Institute of Mathematics, Polish Academy of Sciences,\\
\'Sniadeckich 8, 00-656 Warsaw, Poland\\
e-mail:wz@impan.gov.pl\\
Institute of Mathematics and Cryptology, 
Cybernetics Faculty, \\
Military University of Technology,\\
S. Kaliskiego 2, 00-908 Warsaw, Poland\\}

\begin{abstract}
We find a global a priori estimate for solutions to the Navier-Stokes equations with periodic boundary conditions guaranteeing in view of the Serrin type condition the existence of global regular solutions. We derive the following estimate
$$
\|V(t)\|_{H^1(\Omega)}\le c,
\eqno(1)
$$
where $V$ is the velocity of the fluid.

\noindent
The estimate (1) is proved in two steps. First we derive a global estimate guaranteeing the existence of global regular solutions to weakly compressible Navier-Stokes equations with large second viscosity, density close to a constant and gradient part of velocity small.\\
Next we show that solutions to the Navier-Stokes equations remain close to solutions to the weakly compressible Navier-Stokes equations if the corresponding initial data and external forces are sufficiently close.
\end{abstract}

\noindent
{\bf Mathematical Subject Classification (2010)}: 35A01, 35B10, 35B45, 35D35, 35Q30, 76D05, 76D03, 76N10, 76F50

\noindent
{\bf Key Words and Pharases}: Navier-Stokes equations, global existence of regular solutions, weakly compressible Navier-Stokes equations, stability, the Serrin type condition

\section{Introduction}\label{s1}

We are going to prove existence of global regular periodic solutions to the Navier-Stokes equations in a box $\Omega\subset\R^3$
\begin{equation}\eqal{
&a(V_t+V\cdot\nabla V)-\mu\Delta V+\nabla P=aF\quad &{\rm in}\ \ \Omega\times\R_+,\cr
&\divv V=0\quad &{\rm in}\ \ \Omega\times\R_+,\cr
&V|_{t=0}=V_0\quad &{\rm in}\ \ \Omega,\cr}
\label{1.1}
\end{equation}
where $V=(V_1(x,t),V_2(x,t),V_3(x,t))\in\R^3$ is the velocity of the fluid, $x=(x_1,x_2,x_3)$ are the Cartesian coordinates, $P=P(x,t)\in\R$ is the pressure, $\mu>0$ is the viscosity coefficient, $F=F(x,t)=(F_1(x,t),F_2(x,t),F_3(x,t))\in\R^3$ is the external force and $a$ is positive constant.

Since the existence of global regular solutions to weakly compressible (second viscosity coefficient large, density close to a constant and divergence of velocity small) Navier-Stokes equations is known (see \cite{Z1}) we are looking for solutions to (\ref{1.1}) as for stability of these regular solutions. Therefore, the weakly compressible barotropic motions are described by the following problem
\begin{equation}\eqal{
&\varrho v_t+\varrho v\cdot\nabla v-\mu\Delta v-\nu\nabla\divv v+\nabla p=\varrho f\quad &{\rm in}\ \ \Omega\times\R_+,\cr
&\varrho_t+\divv(\varrho v)=0\quad &{\rm in}\ \ \Omega\times\R_+,\cr
&v|_{t=0}=v_0,\ \ \varrho|_{t=0}=\varrho_0\quad &{\rm in}\ \ \Omega,\cr}
\label{1.2}
\end{equation}
where $\varrho=\varrho(x,t)\in\R_+$ is the density of the fluid, $v=(v_1(x,t),v_2(x,t)$, $v_3(x,t))\in\R^3$ is velocity, $p=A\varrho^\varkappa$, $\varkappa>1$, $A$- constant, is the pressure.

\noindent
By weakly compressible motions we mean such motions that
\begin{equation}
\varrho=a+\eta,
\label{1.3}
\end{equation}
where $a$ is the constant from (\ref{1.1}) and $\eta$ is small. Moreover, $\nu$ is large and $\divv v(0)$ is as small as we want. Then problem (\ref{1.2}) takes the form
\begin{equation}\eqal{
&(a+\eta)(v_t+v\cdot\nabla v)-\mu\Delta v-\nu\nabla\divv v+a_0\nabla\eta\cr
&=(p_\varrho(a)-p_\varrho(a+\eta))\nabla\eta+(a+\eta)f,\cr
&\eta_t+v\cdot\nabla\eta+a\divv v+\eta\divv v=0,\cr
&v|_{t=0}=v_0,\ \ \eta|_{t=0}=\eta_0,\cr}
\label{1.4}
\end{equation}
where $p_\varrho={dp\over d\varrho}$, $a_0=p_\varrho(a)$.

\noindent
To show stability of incompressible motions in the set of weakly compressible barotropic motions we introduce the quantities
\begin{equation}
u=v-V,\quad q=p-P,\quad g=f-F
\label{1.5}
\end{equation}
Then the quantities $u$, $q$, $g$ satisfy
\begin{equation}\eqal{
&au_t+\eta(v_t+v\cdot\nabla v)+a(v\cdot\nabla u+u\cdot\nabla V)-\mu\Delta u-\nu\nabla\divv v+\nabla q\cr
&=ag+\eta f,\cr
&u|_{t=0}=v_0-V_0\equiv u_0,\cr}
\label{1.6}
\end{equation}
where we used that $v\cdot\nabla v-V\cdot\nabla V=v\cdot\nabla u+u\cdot\nabla V=v\cdot\nabla u+u\cdot\nabla(v-u)$.

\noindent
Moreover, $\eta$ is a solution to the problem
\begin{equation}\eqal{
&\eta_t+v\cdot\nabla\eta+a\divv v+\eta\divv v=0\cr
&\eta|_{t=0}=\eta_0.\cr}
\label{1.7}
\end{equation}
We introduce potentials $\varphi$ and $\psi$ such that
\begin{equation}
v=\rot\psi+\nabla\varphi+G,
\label{1.8}
\end{equation}
where
$$
G={1\over a|\Omega|}\bigg[-\intop_\Omega\eta vdx+\intop_{\Omega^t}(a+\eta)fdxdt'+\intop_\Omega(a+\eta_0)v_0dx\bigg].
$$
Then problems (\ref{1.6}) and (\ref{1.7}) take the form
\begin{equation}\eqal{
&au_t+\eta(\rot\psi_t+\nabla\varphi_t+G_t+(\rot\psi+\nabla\varphi+G)\cdot\nabla (\rot\psi+\nabla\varphi))\cr
&\quad+a((\rot\psi+\nabla\varphi+G)\cdot\nabla u+u\cdot\nabla V)-\mu\Delta u-\nu\nabla\Delta\varphi+\nabla q\cr
&=ag+\eta f,\cr
&u|_{t=0}=\nabla\varphi_0+\rot\psi_0-V_0,\cr}
\label{1.9}
\end{equation}
and
\begin{equation}\eqal{
&\eta_t+v\cdot\nabla\eta+a\Delta\varphi+\eta\Delta\varphi=0\cr
&\eta|_{t=0}=\eta_0.\cr}
\label{1.10}
\end{equation}
The aim of this paper is deriving such estimate for solutions to the Navier-Stokes equations (\ref{1.1}) that regularity of weak solutions can be proved. We are not able to do it for solutions to (\ref{1.1}) directly.

\noindent
However we proved in \cite{Z1} the existence of global regular solutions to weakly compressible Navier-Stokes system (\ref{1.4}). Having the result from \cite{Z1} we construct system (\ref{1.6}) for differences (\ref{1.5}) with coefficients dependent on regular global solutions to (\ref{1.4}). Therefore solutions to (\ref{1.1}) are approximated by solutions to (\ref{1.4}). Hence we have the system with small data so global estimates for regular solutions are easily derived. We restrict our considerations to derive the estimate for $\|u\|_{L_\infty(\R_+;H^1(\Omega))}$. Having the same estimate for $v$ we obtain in (\ref{3.23}) that $\|V\|_{L_\infty(\R_+;H^1(\Omega))}$ is bounded by data.

To formulate the main result we first recall the theorem on existence of global regular solutions to problem (\ref{1.4}) from \cite{Z1}.

\begin{thA}\label{tA}
Let $\nu>0$, $T>0$ be given. Let $f_g$ be the gradient part of $f$. Let $v=\nabla\varphi+\rot\psi$, $\varrho=a+\eta$, $a$-positive constant be a solution\break to (\ref{1.4}). Let $\eta(0)$, $\nabla\varphi(0)$, $\rot\psi(0)\in\Gamma_1^2(\Omega)$, $|\eta|_\infty<a/2$, $\|\nabla\varphi(0)\|_{\Gamma_1^2(\Omega)}\le c/\sqrt{\nu}$, $\|\rot\psi(0)\|_{\Gamma_1^2(\Omega)}\le c$, $\|\eta(0)\|_{\Gamma_1^2(\Omega)}\le c/\nu$, $f\in L_2(0,T;\Gamma_1^1(\Omega))$,\break $|f_g|_{L_2(0,T;L_{6/5}(\Omega))}\le c/\nu$, $f\in L_6(0,T;L_3(\Omega))\cap L_1(0,T;L_\infty(\Omega))$. Assume that there exist positive constants $\varphi_*$ and $c_1$ such that $c_1\nu^\varkappa\le\varphi_*\le\varphi(0)$, where $\varkappa\in(1/2,1)$. Then for $\nu$ sufficiently large and $T<\nu$ there exists a regular solution to problem (\ref{1.4}) such that
$$\eqal{
&\sqrt{\nu}\nabla\varphi,\rot\psi\in L_\infty(0,T;\Gamma_1^2(\Omega))\cap L_2(0,T;\Gamma_1^3(\Omega)),\cr
&\nu\nabla\varphi\in L_2(0,T;\Gamma_1^3(\Omega)),\quad \nu\eta\in L_\infty(0,T;\Gamma_1^2(\Omega)).\cr}
$$
Hence $v\in{\frak N}(\Omega^t)$, $t\le T$ and the estimate holds
\begin{equation}\eqal{
&\|v\|_{{\frak N}(\Omega^t)}\le\phi(\|\nu\eta(0),\sqrt{\nu}\nabla\varphi(0), \rot\psi(0)\|_{\Gamma_1^2(\Omega)},\nu|f_g|_{L_2(0,T;L_{6/5}(\Omega))},\cr
&\|f\|_{L_2(0,T;\Gamma_1^1(\Omega))},\|f\|_{L_6(0,T;L_3(\Omega))\cap L_1(0,T;L_\infty(\Omega))})\equiv D(0),\quad t\le T,\cr}
\label{1.11}
\end{equation}
where $\phi$ is an increasing positive function.\\
Assuming the decay $\|f(t)\|_1\le f_0e^{-\alpha t}$, $f_0=\const$, $\alpha>0$, and that $D(kT)$ is finite, where interval $(0,T)$ is replaced lby $(kT,(k+1)T)$, $k\in\N_0$ and assuming that $T$ is such that
$$
-{a_*\over 2}T+c\intop_0^T(|v(t)|_{3,1}^2+|\Delta\varphi(t)|_\infty^2+ \|v(t)\|_1^4+\|f(t)\|_1^2)dt\le 0,
$$
where $a_*=\min\{a_0,\mu/a\}$, $a_0=p_\varrho(a)$, we obtain that
$$
\|v\|_{{\frak N}(\Omega\times(kT,(k+1)T))}\le D(kT).
$$
\end{thA}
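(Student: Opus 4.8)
The plan is to establish Theorem A by the standard programme for global regular solutions: local existence, uniform a priori estimates, continuation, and finally a time-stepping (absorption) argument for the iterated bound. First I would construct a local-in-time solution to (\ref{1.4}) in the class ${\frak N}(\Omega^t)$ by a linearization–fixed point scheme (or a Galerkin approximation), using the parabolicity supplied by the $\mu\Delta v$ and $\nu\nabla\divv v$ terms together with the transport structure of the continuity equation (\ref{1.10}) for $\eta$. Because $\eta$ solves a first-order transport equation, its regularity is simply propagated along the flow of $v$, so the local theory reduces to controlling $v=\nabla\varphi+\rot\psi$ in the parabolic norms and $\eta$ in $L_\infty(0,T;\Gamma_1^2)$.

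The heart of the argument is the derivation of the $\nu$-weighted a priori estimates. I would project the momentum equation in (\ref{1.4}) onto its gradient and solenoidal parts by applying $\divv$ and $\rot$: this yields a strongly damped equation for $\varphi$, in which the second-viscosity term appears as $\nu\Delta^2\varphi$ and supplies a coercive contribution of size $\nu\|\nabla\Delta\varphi\|^2$, together with an essentially incompressible equation for $\rot\psi$. Testing these against $\varphi$, $\Delta\varphi$ and their higher derivatives (and likewise for $\psi$), and using (\ref{1.10}) to express $\divv v=\Delta\varphi$ wherever it appears, one closes estimates in which the gradient part is forced to be of size $1/\sqrt{\nu}$ and $\eta$ of size $1/\nu$. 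The convective term $v\cdot\nabla v$ must be split according to the Helmholtz decomposition so that the self-interaction of $\nabla\varphi$ is absorbed by the $\nu$-damping while $\rot\psi$ is handled in the usual incompressible fashion; the smallness hypotheses $\|\nabla\varphi(0)\|_{\Gamma_1^2}\le c/\sqrt{\nu}$, $\|\eta(0)\|_{\Gamma_1^2}\le c/\nu$ and $|f_g|_{L_2(0,T;L_{6/5})}\le c/\nu$ are exactly what make this bootstrap close for $\nu$ large and $T<\nu$, yielding (\ref{1.11}).

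For the iterated bound I would run an energy identity of the form $\frac{d}{dt}E+a_*E\le(\text{nonlinear and forcing terms})$, where $a_*=\min\{a_0,\mu/a\}$ is the effective damping rate. Integrating over $(kT,(k+1)T)$ and invoking the hypothesis that $-\frac{a_*}{2}T+c\intop_0^T(|v|_{3,1}^2+|\Delta\varphi|_\infty^2+\|v\|_1^4+\|f\|_1^2)\,dt\le 0$ shows that the accumulated growth on each slab is non-positive, so the dissipation dominates and the norm at the end of the slab is controlled by its value $D(kT)$ at the start; the exponential decay $\|f(t)\|_1\le f_0e^{-\alpha t}$ guarantees that the forcing contribution on later slabs does not reintroduce growth.

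The main obstacle I expect is the simultaneous tracking of the two small parameters inside one and the same a priori estimate: the smallness of the compressible part (powers of $1/\nu$) on the one hand, and the lower bound $c_1\nu^\varkappa\le\varphi_*\le\varphi(0)$ with $\varkappa\in(1/2,1)$ on the other. One must show that the $\nu$-coercivity on $\nabla\varphi$ is strong enough to beat the nonlinear terms coupling $\nabla\varphi$, $\rot\psi$ and $\eta$, while the lower bound on $\varphi$ simultaneously prevents degeneracy of the terms in which $\varphi$ enters a denominator. Balancing these opposing requirements, and keeping every constant uniform in $\nu$ on the time scale $T<\nu$, is the delicate bookkeeping on which the whole estimate turns.
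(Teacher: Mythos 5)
You should first be aware that this paper contains no proof of Theorem A at all: the statement is explicitly \emph{recalled} from the companion paper \cite{Z1} (``Global regular periodic solutions for equations of weakly compressible barotropic fluid motions''), and the present article only uses its conclusion as an input to Lemmas 3.1--3.4 and Theorem B. So there is no internal argument to compare your proposal against, and any assessment has to be of the proposal on its own terms.

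On those terms, what you have written is a programme, not a proof, and the gap is precisely that every step where the theorem could fail is asserted rather than carried out. The entire content of the result lies in showing that the $\nu$-weighted bootstrap \emph{closes}: that after applying $\divv$ and $\rot$ to the momentum equation, testing against derivatives of $\varphi$ and $\psi$, and estimating the cross terms $\nabla\varphi\cdot\nabla\rot\psi$, $\rot\psi\cdot\nabla\nabla\varphi$, $\eta v_t$, etc., the resulting nonlinear contributions are dominated by $\nu\|\nabla\Delta\varphi\|^2$ and $\mu\|\nabla\rot\psi\|^2$ uniformly on a time interval of length comparable to $\nu$. You state that the smallness hypotheses ``are exactly what make this bootstrap close,'' but this is the claim to be proved, and it is not obvious: the interval length $T<\nu$ grows with the same parameter that provides the damping, so one must track that the Gronwall factors accumulated over $(0,T)$ do not destroy the $1/\sqrt{\nu}$ and $1/\nu$ smallness --- this is the role of the explicit sign condition $-\frac{a_*}{2}T+c\int_0^T(\dots)dt\le 0$ in the statement, which your sketch invokes only at the very end and only qualitatively. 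In addition, your interpretation of the hypothesis $c_1\nu^\varkappa\le\varphi_*\le\varphi(0)$ as preventing ``degeneracy of the terms in which $\varphi$ enters a denominator'' does not correspond to anything visible in system (\ref{1.4}), where $\varphi$ never appears in a denominator; without access to \cite{Z1} one cannot tell what this lower bound is actually for, and a proof would have to identify and use it at a specific step. As it stands, the proposal is a plausible table of contents for the proof in \cite{Z1}, but none of the estimates that constitute the theorem have been established.
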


\begin{assA}\label{aA}
Let $(v,\eta)$ be a solution to problem (\ref{1.4}) described by Theorem A. Let $v$ be described by potential $\varphi$ and $\psi$ by (\ref{1.8}). Let $T>0$ be given and
$$
B_2^2(t)=\|v(t)\|_2^2+\|\nabla\varphi(t)\|_2^2
$$
and
$$
\intop_{kT}^{(k+1)T}B_2^2(t)dt\le c\bigg(D^2(kT)+ {D^2(kT)\over\nu^2}+A_1^2A_2^2\bigg), \quad k\in\N_0,
$$
where $D(kT)$ is defined in Theorem A and $A_1$ is defined in Lemma 2.1 in \cite{Z1},
$$
A_2=|f|_{18/7,6,\Omega\times(kT,(k+1)T)}+|\varrho_0|_\infty^{1/6}|v_0|_6.
$$
\end{assA}

\begin{thB}\label{tB}
Let Assumption \ref{aA} hold. Let $(u,\eta)$ satisfy problem (\ref{1.9}), (\ref{1.10}). Assume that
$$
|u_x(0)|_2^2\le\gamma\in(0,\gamma_*],
$$
where $\gamma_*$ is so small that
$$
c\exp\bigg(2c\intop_{kT}^{(k+1)T}B_2^2(t)dt\bigg)\gamma_*^2\le\mu/2,
$$
where $T$ is the time of local solutions for any finite interval $[kT,(k+1)T]$. Assume that
$$
\|g_r(t)\|_1^2\le\gamma_0^2\exp(-\alpha t),
$$
where $g$ is defined in (\ref{1.5}) and $g_r$ is the rotational part of $g$, $\gamma_0$, $\alpha$ are some constants.\\
Then for sufficiently small $\gamma$, $\gamma_0$ and sufficiently large $T$ we have
$$
\|u(t)\|_1^2\le\gamma\exp\bigg[2c\intop_{kT}^{(k+1)T}B_2^2(t)dt\bigg],\quad k\in\N_0,\ \ t\in[kT,(k+1)T].
$$
Then for solutions to problem (\ref{1.1}) we have
$$
\|V(t)\|_1^2\le\gamma\exp\bigg[2c\intop_{kT}^tB_2^2(t')dt'\bigg]+D^2(kT),
$$
for $t\in[kT,(k+1)T]$, where $T$ is defined in Theorem A.
\end{thB}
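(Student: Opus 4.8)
The plan is to derive a differential inequality for the quantity $\|u(t)\|_1^2 = \|u(t)\|_{H^1(\Omega)}^2$ and then integrate it via a Gronwall-type argument. First I would test the momentum equation (\ref{1.9}) with $u$ and with $-\Delta u$ (or equivalently work with $u_x$) to build the energy of $u$ in $H^1$. Testing with $u$ gives the standard $L_2$ energy with the viscous term $\mu\|u_x\|_2^2$ on the left; testing with $-\Delta u$ (combined with the divergence structure $\nu\nabla\divv v$ and the pressure term $\nabla q$, which drop out against divergence-free test functions up to the compressible correction carried by $\eta$) produces the $\|u_x\|_2^2$ time derivative together with the dissipation $\mu\|u_{xx}\|_2^2$. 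Adding these two identities yields an inequality of the form
\begin{equation*}
{d\over dt}\|u(t)\|_1^2 + \mu\|u_x(t)\|_1^2 \le c\,B_2^2(t)\,\|u(t)\|_1^2 + (\text{forcing and }\eta\text{-terms}),
\end{equation*}
where the coefficient $B_2^2(t)=\|v(t)\|_2^2+\|\nabla\varphi(t)\|_2^2$ arises precisely from estimating the convective and variable-coefficient terms $\eta(\rot\psi_t+\dots)$, $a(v\cdot\nabla u+u\cdot\nabla V)$ by Hölder and Sobolev embeddings, throwing the top-order factor onto the regular solution $v$ (controlled by Theorem A) and keeping $\|u\|_1^2$ as the Gronwall factor.

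The key technical point in the order of operations: I would estimate each nonlinear term so that the dangerous derivatives land on $v$, $\nabla\varphi$, $V$ (all controlled through Theorem A and Assumption \ref{aA}), while $u$ appears only through $\|u\|_1$ and its dissipation $\|u_x\|_1$, the latter absorbed into the left-hand side using that the Serrin-type norms of $v$ are finite. The small compressible terms involving $\eta$ and $\nu\nabla\divv v$ are controlled using the smallness hypotheses of Theorem A (namely $\nu$ large, $|\eta|_\infty<a/2$, and the $\Gamma_1^2$-smallness of $\nabla\varphi(0)$, $\eta(0)$), so they contribute only lower-order or exponentially decaying forcing. The forcing $g$ enters through $\|g_r(t)\|_1^2\le\gamma_0^2\exp(-\alpha t)$, which is integrable in time; together with the smallness of $\gamma_0$ this keeps the inhomogeneous part of the Gronwall bound subordinate.

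Next I would integrate the differential inequality over $[kT,t]\subset[kT,(k+1)T]$. Dropping the nonnegative dissipation term and applying Gronwall's inequality gives
\begin{equation*}
\|u(t)\|_1^2 \le \Big(\|u_x(kT)\|_2^2 + \text{forcing}\Big)\exp\Big(2c\intop_{kT}^{(k+1)T}B_2^2(t')\,dt'\Big).
\end{equation*}
Using $|u_x(kT)|_2^2\le\gamma$ and the smallness of the forcing contribution (controlled by $\gamma_0$), and absorbing constants, yields the asserted bound $\|u(t)\|_1^2\le\gamma\exp\big[2c\int_{kT}^{(k+1)T}B_2^2(t)\,dt\big]$; the hypothesis on $\gamma_*$ guarantees this stays below $\mu/2$ so the local solution continues and the induction on $k\in\N_0$ closes. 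Finally, since $u=v-V$ and $v$ is the regular weakly compressible solution bounded in $H^1$ by $D(kT)$ via Theorem A, the triangle inequality $\|V(t)\|_1^2\le 2\|u(t)\|_1^2+2\|v(t)\|_1^2$ gives the stated estimate for $V$ with the constant $D^2(kT)$ absorbing the $v$-contribution.

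I expect the main obstacle to be the $H^1$ (rather than merely $L_2$) energy estimate: testing with $-\Delta u$ forces careful handling of the pressure term $\nabla q$ and the compressible term $\nu\nabla\divv v=\nu\nabla\Delta\varphi$, since $u$ is not exactly divergence-free (the difference $v-V$ carries the gradient part $\nabla\varphi$). The delicate balance is ensuring that every top-order term can be either absorbed into $\mu\|u_{xx}\|_2^2$ or estimated by $B_2^2(t)\|u\|_1^2$ plus integrable forcing, which relies essentially on the smallness of $\divv v$ and the largeness of $\nu$ from the weakly compressible regime of Theorem A. Keeping the Gronwall coefficient exactly $cB_2^2(t)$ — matching Assumption \ref{aA}'s integral bound — is what makes the time-uniform iteration over the blocks $[kT,(k+1)T]$ succeed.
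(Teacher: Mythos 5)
Your overall architecture (an $H^1$ differential inequality with Gronwall coefficient $B_2^2(t)$, iterated over blocks $[kT,(k+1)T]$, followed by the triangle inequality $\|V\|_1\le\|u\|_1+\|v\|_1$ with Theorem A bounding $v$) matches the paper's, but there are two genuine gaps in the middle of the argument. First, your claim that every nonlinear term can be estimated so that ``the dangerous derivatives land on $v$, $\nabla\varphi$, $V$'' while $u$ enters only through $\|u\|_1^2$ is not achievable: the convective difference $u\cdot\nabla V=u\cdot\nabla(v-u)$ contains the self-interaction $u\cdot\nabla u$, and at the $H^1$ level its worst piece is $\intop_\Omega u_x\cdot\nabla u\cdot u_x\,dx$, which by interpolation is bounded by $\varepsilon|u_{xx}|_2^2+c\,|u_x|_2^6$. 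The paper's inequality (\ref{3.13}) therefore reads ${d\over dt}\|u\|_1^2+\mu\|u\|_2^2\le c\|u\|_1^6+cB_2^2\|u\|_1^2+cG_2^2$, which is \emph{superlinear} in $\|u\|_1^2$; plain Gronwall, as you propose, does not apply. This is exactly where the hypothesis $c\exp\big(2c\intop_{kT}^{(k+1)T}B_2^2\,dt\big)\gamma_*^2\le\mu/2$ is consumed: the paper (Lemma \ref{l3.4}) rewrites the inequality as ${d\over dt}\|u\|_1^2\le-(\mu-c\|u\|_1^4)\|u\|_1^2+cB_2^2\|u\|_1^2+cG_2^2$ and runs a barrier/continuation argument (the first exit time $t_*$ from $\{X^2\le\gamma\}$ cannot exist because ${d\over dt}X^2|_{t=t_*}<0$), using the smallness of $\gamma_*$ to absorb the quartic coefficient into $\mu/2$. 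You cite that hypothesis only as a condition for continuing the local solution, which misses its actual role.

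Second, you propose to drop the dissipation before integrating, which destroys the mechanism that closes the induction over $k$. With the dissipation discarded, the bound at $t=(k+1)T$ is $\gamma\exp\big(2c\intop_{kT}^{(k+1)T}B_2^2\,dt\big)>\gamma$, so the induction hypothesis $\|u(kT)\|_1^2\le\gamma$ cannot be restored at the next block. The paper retains a damping portion of the viscosity (splitting $\mu=\mu_1+\mu_2$ in Remark \ref{r3.3}, and keeping the full $-\mu T$ in (\ref{3.24})--(\ref{3.25})) so that over a block of length $T$ the factor $e^{-\mu T}$ dominates $e^{cA^2}$ for $T$ large, yielding $\|u((k+1)T)\|_1^2\le\gamma$ again; the smallness of $\gamma_0$ and the decay $e^{-\alpha kT}$ control the forcing contribution. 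A smaller technical point: the obstacle you flag about $u$ not being divergence-free is resolved in the paper not by testing with $-\Delta u$ but by testing with $\bar u=u-\nabla\varphi$ (and its $x$-derivative), which \emph{is} divergence-free and thus kills the pressure $\nabla q$ exactly; you identify the difficulty but do not supply this device.
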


\begin{reC}
We hope that the paper meets one of the statements from \cite{F}.
\end{reC}

\noindent
There is a huge literature concerning the regularity problem of weak solutions to the Navier-Stokes equations. Therefore we are not able to present all papers devoted to this problem. Moreover, we are not able to close the list of mathematicians trying to solve it. Hence, we concdentrate the presentation on some directions and recall mathematicians working in these areas.
\vskip6pt

\begin{itemize}
\item[1.] Formulation of sufficient conditions guaranteeing regularity of weak solutions.\\
The first who formulated such conditions was J. Serrin \cite{S}. This approach was continued by D. Chae, H.J. Choe, H. Kozono, H. Sohr, J. Neustupa, P. Penel and the references of their papers are cited in \cite{Z3, Z4}. We have to recall results of G. Seregin, V. \v Sver\'ak, T. Shilkin, A. Mikhaylov (see \cite{S1, S2, S3, S4, SSS, SS1, ESS, MS}).
\item[2.] Local regularity theory.\\
The direction of examining regularity of weak solutions of the Navier-Stokes equations was initiated in the celebrated paper of L. Caffarelli, R. Kohn, L. Nirenberg (see \cite{CKN}). The famous mathematicians working in this directions are G. Seregin \cite{S1, S2, S3, S4}, V. \v Sver\'ak \cite{SS1}.
\item[3.] Rotating Navier-Stokes equations.\\
The existence of global regular solutions to the rotating Navier-Stokes equations was strongly examined by A. Babin, A. Mahalov, B. Nicolaenko (see \cite{BMN1, BMN2, BMN3, MN}).
\item[4.] Global regular solutions to the Navier-Stokes equations with some special properties. We have to distinguish the following directions
\begin{itemize}
\item[a.] Thin domains (see \cite{RS1, RS2, RS3}).
\item[b.] Small variations of vorticity (see \cite{CF}).
\item[c.] Motions in cylindrical domains with data close to data of 2d solutions (see \cite{Z5, Z6, Z7, Z8, NZ}).
\item[d.] Motions in axisymmetric domains with data close to data of axisymmetric solutions (see \cite{Z3, Z4}).
\end{itemize}
\end{itemize}

\section{Notation and auxiliary results}\label{s2}

We introduce the following simplified notation:
$$
\|u\|_{L_p(\Omega)}=|u|_p,\quad p\in[1,\infty],\quad \|u\|_{H^s(\Omega)}=\|u\|_s,\quad s\in\R_+.
$$
Moreover, we introduce
$$\eqal{
&|u|_{k,l}^2=\sum_{i=0}^l\|\partial_t^iu\|_{k-i},\quad &|u|_{k,l,r,\Omega^t}=\bigg(\intop_0^t|u(t')|_{k,l}^rdt'\bigg)^{1/r},\cr
&|u|_{r,q,\Omega^t}=\bigg(\intop_0^t|u(t')|_r^q\bigg)^{1/q},\quad &\|u\|_{L_r(0,t;H^s(\Omega))}=\|u\|_{s,r,\Omega^t}.\cr}
$$
Let $v$ be defined in the form (\ref{1.8}). We say that $v\in{\frak M}(\Omega^T)$ if
$$\eqal{
&\|v\|_{{\frak M}(\Omega^T)}=\esssup_{t\le T}\bigg(|\nabla\varphi(t)|_{2,1}^2+ {1\over\nu}|\rot\psi(t)|_{2,1}^2\bigg)^{1/2}\cr
&\quad+\bigg(\intop_0^T\bigg(\nu|\nabla\varphi(t)|_{3,1}^2+{1\over\nu} |\rot\psi(t)|_{3,1}^2\bigg)dt\bigg)^{1/2}<\infty.\cr}
$$
Next we say that $v\in{\frak N}(\Omega^T)$ if
$$\eqal{
\|v\|_{{\frak N}(\Omega^T)}&=\esssup_{t\le T} (\nu|\nabla\varphi(t)|_{2,1}^2+|\rot\psi(t)|_{2,1}^2)^{1/2}\cr
&\quad+\bigg(\intop_0^T (\nu^2|\nabla\varphi(t)|_{3,1}^2+|\rot\psi(t)|_{3,1}^2)dt\bigg)^{1/2}<\infty.\cr}
$$
If $\|v\|_{{\frak M}(\Omega^T)}\le D_0$ then $\|v\|_{{\frak N}(\Omega^T)}\le(1+\sqrt{\nu})D_0$.

\noindent
Let
$$
\Gamma_l^k(\Omega)=\{u:|u|_{k,l}<\infty\},\quad l\le k,\ \ l,k\in\N_0.
$$
To apply the Poincar\'e inequality we need

\begin{remark}\label{r2.1}(see [Z1, Lemma 2.1])
Let $(\varrho,v)$ be a solution to problem (\ref{1.2}). Assume that $p=p(\varrho)=A\varrho^\varkappa$, $\varkappa>1$, $\varrho_0\in L_1(\Omega)$, $f\in L_{\infty,1}(\Omega^t)$, $\intop_\Omega\big({1\over2}\varrho_0v_0^2+ {A\over\varkappa-1}\varrho_0^\varkappa\big)dx<\infty$. Then
\begin{equation}\eqal{
&\intop_\Omega\bigg({1\over2}\varrho v^2+{A\over\varkappa-1}\varrho^\varkappa\bigg)dx+\mu|\nabla v|_{2,\Omega^t}^2+ \nu|\divv v|_{2,\Omega^t}^2\cr
&\le2|\varrho_0|_1|f|_{\infty,1,\Omega^t}^2+{3\over2}\intop_\Omega\bigg({1\over2} \varrho_0v_0^2+{A\over\varkappa-1}\varrho_0^\varkappa\bigg)dx\equiv
\bar A_1^2.\cr}
\label{2.1}
\end{equation}
\end{remark}

\noindent
From (\ref{2.1}) and for $|\eta|\le a/2$ we obtain
\begin{equation}
|v|_2^2+\nu|\Delta\varphi|_{2,\Omega^t}^2\le c(a)\bar A_1^2.
\label{2.2}
\end{equation}
Our aim is to find an estimate for $\intop_\Omega udx$.

\noindent
Multiply $(\ref{1.2})_2$ by $v$, add to $(\ref{1.2})_1$ and integrate over $\Omega$. Using the periodic boundary conditions we have
\begin{equation}
{d\over dt}\intop_\Omega\varrho vdx=\intop_\Omega\varrho fdx.
\label{2.3}
\end{equation}
Consider problem (\ref{1.1}). Using the periodic boundary conditions we obtain
\begin{equation}
{d\over dt}\intop_\Omega Vdx=\intop_\Omega Fdx.
\label{2.4}
\end{equation}
Equations (\ref{2.3}) and (\ref{2.4}) imply
\begin{equation}
{d\over dt}\intop_\Omega(\varrho v-aV)dx=\intop_\Omega(\varrho f-aF)dx.
\label{2.5}
\end{equation}
Hence, it follows
\begin{equation}
{d\over dt}\intop_\Omega(au+\eta v)dx=\intop_\Omega(ag+\eta f)dx.
\label{2.6}
\end{equation}
Integrating (\ref{2.6}) with respect to time yields
\begin{equation}\eqal{
\intop_\Omega udx&=-{1\over a}\intop_\Omega\eta vdx+{1\over a}\intop_{\Omega^t}(ag+\eta f)dxdt'\cr
&\quad+\intop_\Omega u_0dx+{1\over a}\intop_\Omega\eta_0v_0dx.\cr}
\label{2.7}
\end{equation}
Hence
\begin{equation}\eqal{
\bigg|\intop_\Omega udx\bigg|&\le{1\over a}(|\eta|_2|v|_2+|\eta|_{2,\infty,\Omega^t}|f|_{2,1,\Omega^t}+|\eta_0|_2|v_0|_2)\cr
&\quad+\bigg|\intop_{\Omega^t}gdxdt'\bigg|+\bigg|\intop_\Omega u_0dx\bigg|\le {c\over a}(|\eta|_{2,\infty,\Omega^t}+|\eta_0|_2)\bar A_1\cr
&\quad+\bigg|\intop_{\Omega^t}gdxdt'\bigg|+\bigg|\intop_\Omega u_0dx\bigg|\cr}
\label{2.8}
\end{equation}
From Lemma 2.1 \cite{Z1},
\begin{equation}
|v|_2^2+\mu\intop_{kT}^t(\|v\|_1^2+\nu|\divv v|_2^2)dt'\le cA_1^2,
\label{2.9}
\end{equation}
where $t\in[kT,(k+1)T]$, $k\in\N_0$.
\goodbreak

\section{Estimates}\label{s3}

First we obtain the energy estimate for solutions to problems (\ref{1.9}), (\ref{1.10}).

\begin{lemma}\label{l3.1}
Assume that $\nabla\varphi\in H^1(\Omega)$, $v\in L_6(\Omega)$, $\eta\in H^1(\Omega)$, $v_t\in L_2(\Omega)$, $g_r\in L_2(\Omega)$, $f\in L_3(\Omega)$ and $\divv g_r=0$. Assume that $A_1$ is the bound of the energy inequality for solutions to problem (\ref{1.2}), $\eta\in L_\infty(\Omega^t)$, $\Delta\varphi\in L_2(\Omega^t)$, $v_t\in L_2(\Omega^t)$, $g\in L_2(\Omega^t)$ and $\big|\intop_\Omega u_0dx\big|<\infty$. Then
\begin{equation}\eqal{
&{d\over dt}|u|_2^2+\mu\|u\|_1^2\le c|u|_3^2(\|\nabla\varphi\|_1^2+|v|_6^2)+c(|\nabla\varphi_t|_2^2+ \|\nabla\varphi\|_1^2\cr
&\quad+\|\nabla\varphi\|_1^4)+c[|v|_3^2\|\nabla\varphi\|_1^2+ |v|_6^4\|\eta\|_1^2\cr
&\quad+|v|_6^2(\|\eta\|_1^2\|\nabla\varphi\|_1^2+\|\eta\|_1^2+ \|\nabla\varphi\|_1^2)+c|v_t|_2^2|\eta|_3^2\cr
&\quad+c|g_r|_2^2+ c|\eta|_6^2|f|_3^2]+c\bigg[(|\eta|_2^2+|\eta_0|_2^2)A_1^2+ \bigg|\intop_{\Omega^t}gdxdt'\bigg|^2+\bigg|\intop_\Omega u_0dx\bigg|^2\bigg].\cr}
\label{3.1}
\end{equation}
\end{lemma}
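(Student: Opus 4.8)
The plan is to run the standard energy method: multiply the momentum equation $(\ref{1.9})_1$ by $u$ and integrate over $\Omega$, using the periodic boundary conditions so that all boundary terms generated by integration by parts vanish. The leading term $au_t$ gives ${a\over2}{d\over dt}|u|_2^2$; the viscosity $-\mu\Delta u$ gives $\mu|\nabla u|_2^2$; and, since $\divv u=\divv v-\divv V=\Delta\varphi$ (because $\divv\rot\psi=0$, $G$ is constant in $x$ and $\divv V=0$), the term $-\nu\nabla\Delta\varphi$ integrates by parts to the nonnegative quantity $\nu|\Delta\varphi|_2^2$. These three are kept on the left-hand side.

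I would next dispose of the pressure, and this is the step I expect to be the main obstacle, since it is precisely the genuinely compressible difficulty with no incompressible analogue (where $\divv u=0$ annihilates the pressure). Because $\divv u=\Delta\varphi$, integration by parts gives $\intop_\Omega\nabla q\cdot u\,dx=-\intop_\Omega q\,\Delta\varphi\,dx$. I would control this against the good term $\nu|\Delta\varphi|_2^2$ already present on the left via Young's inequality, $|\intop_\Omega q\Delta\varphi\,dx|\le{\nu\over2}|\Delta\varphi|_2^2+{1\over2\nu}|q|_2^2$, so that the first piece is absorbed and the remaining ${1\over\nu}|q|_2^2$, by largeness of $\nu$ and smallness of the compressible part ($\eta$ small), reduces to lower-order quantities already accounted for. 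The delicacy is exactly that $\divv u\neq0$ here.

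The bulk of the work is then the routine estimation of the remaining terms by Hölder's inequality, the Sobolev embedding $H^1(\Omega)\hookrightarrow L_6(\Omega)$, and Young's inequality, always arranging that the factor carrying $u$ is measured in $L_6$ (hence $\le c\|u\|_1$) or $L_3$ and absorbed into $\mu\|u\|_1^2$. The convective part $a(v\cdot\nabla u+u\cdot\nabla V)$, after writing $\nabla V=\nabla v-\nabla u$ and using $\intop_\Omega(v\cdot\nabla u)\cdot u\,dx=-{1\over2}\intop_\Omega\Delta\varphi|u|^2dx$, yields $c|u|_3^2(\|\nabla\varphi\|_1^2+|v|_6^2)$; the $\eta$-weighted group $\eta(v_t+v\cdot\nabla(\rot\psi+\nabla\varphi)+G_t)$ yields $c|v_t|_2^2|\eta|_3^2$, $c|v|_6^4\|\eta\|_1^2$ and the mixed products $c|v|_6^2(\|\eta\|_1^2\|\nabla\varphi\|_1^2+\|\eta\|_1^2+\|\nabla\varphi\|_1^2)$ together with the block $c(|\nabla\varphi_t|_2^2+\|\nabla\varphi\|_1^2+\|\nabla\varphi\|_1^4)$, using the bound $|\eta|\le a/2$ to control $\eta$ pointwise where convenient; and the forcing $ag+\eta f$ splits as $\intop_\Omega g_r\cdot u\,dx$, where $\divv g_r=0$ and $\intop_\Omega\nabla\varphi\cdot g_r\,dx=0$ give $c|g_r|_2^2$, plus $\intop_\Omega\eta f\cdot u\,dx\le|\eta|_6|f|_3|u|_2$ giving $c|\eta|_6^2|f|_3^2$.

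Finally I would upgrade $\mu|\nabla u|_2^2$ to $\mu\|u\|_1^2$ by the Poincar\'e inequality on the box, $|u|_2^2\le c|\nabla u|_2^2+c\big|\intop_\Omega u\,dx\big|^2$, and insert the mean-value bound (\ref{2.8}) for $\intop_\Omega u\,dx$. Squaring (\ref{2.8}) and using the energy bound $A_1$ from Remark \ref{r2.1} produces exactly the last bracket $c[(|\eta|_2^2+|\eta_0|_2^2)A_1^2+|\intop_{\Omega^t}g\,dxdt'|^2+|\intop_\Omega u_0\,dx|^2]$. Dividing through by the constant $a/2$ and absorbing the $a$-factors into the generic constant $c$ then gives (\ref{3.1}).
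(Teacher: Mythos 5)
There is a genuine gap, and it sits exactly where you yourself flagged ``the main obstacle'': the pressure. The paper does not test the equation with $u$; it tests with $\bar u=u-\nabla\varphi$, which satisfies $\divv\bar u=\divv u-\Delta\varphi=0$. With this test function both $\intop_\Omega\nabla q\cdot\bar u\,dx=-\intop_\Omega q\,\divv\bar u\,dx$ and $\intop_\Omega\nu\nabla\Delta\varphi\cdot\bar u\,dx=\nu\intop_\Omega\Delta\varphi\,\divv\bar u\,dx$ vanish identically, so the pressure $q$ and the second-viscosity term never have to be estimated at all. The price is the long list of correction terms generated by the $-\nabla\varphi$ part of $\bar u$ (the paper's $I_2$, $J_2$, $J_4$, $K_2$, $K_4$, $L_3$, $L_4$, \dots), and these are precisely the $\nabla\varphi$-blocks on the right of (\ref{3.1}); in particular $|\nabla\varphi_t|_2^2$ comes from $-\intop_\Omega u_t\cdot\nabla\varphi\,dx=-\intop_\Omega\nabla\varphi_t\cdot\nabla\varphi\,dx$, not from the $\eta$-weighted group as you suggest.

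Your route leaves $\intop_\Omega\nabla q\cdot u\,dx=-\intop_\Omega q\,\Delta\varphi\,dx$ alive, and after Young's inequality you are left with ${1\over2\nu}|q|_2^2$. But $q=p-P$ contains the incompressible pressure $P$, a nonlocal functional of $V$ determined by $\Delta P=-a\,\divv(V\cdot\nabla V)+a\,\divv F$, so at best $|P|_2\le c|V|_4^2+\dots$, i.e.\ a quantity quartic in $V$ after squaring. Nothing on the right-hand side of (\ref{3.1}) controls $|V|_4^4$ (nor $\|V\|_1$), and no a priori bound on $P$ is available at this stage --- indeed, avoiding any estimate of the pressure is the whole point of the construction, since bounding $P$ in terms of the data is essentially equivalent to the regularity problem one is trying to solve. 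The prefactor $1/\nu$ does not rescue this: the term is not ``lower-order'' in any sense that lets it be absorbed or discarded. A secondary symptom of the same issue: with test function $u$ the gradient part $g_g=\nabla\varphi_g$ of the force also survives as $-\intop_\Omega\varphi_g\Delta\varphi\,dx$, while (\ref{3.1}) contains only $|g_r|_2^2$; testing with $\bar u$ kills $g_g$ for free. So the missing idea is the divergence-free corrector $\bar u=u-\nabla\varphi$; without it the stated inequality is not reachable along your path.
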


\begin{proof}
Let $\bar u=u-\nabla\varphi$. Then $\divv\bar u=0$. Multiply (\ref{1.6}) by $\bar u$ and integrate over $\Omega$. Then we have
\begin{equation}\eqal{
&a\intop_\Omega u_t\cdot\bar udx+\intop_\Omega\eta(v_t+v\cdot\nabla v)\cdot\bar udx+a\intop_\Omega v\cdot\nabla u\cdot\bar udx\cr
&\quad+a\intop_\Omega u\cdot\nabla V\cdot\bar udx-\mu\intop_\Omega\Delta u\cdot\bar udx=a\intop_\Omega g\cdot\bar udx+\intop_\Omega\eta f\cdot\bar udx.\cr}
\label{3.2}
\end{equation}
Now, we examine the particular terms in (\ref{3.2}). The first term on the l.h.s. of (\ref{3.2}) equals
$$
{1\over2}{d\over dt}\intop_\Omega|u|^2dx-\intop_\Omega u_t\cdot\nabla\varphi dx,
$$
where integration by parts in the second term implies
$$
-\intop_\Omega u_t\cdot\nabla\varphi dx=\intop_\Omega\Delta\varphi_t\varphi dx=-\intop_\Omega\nabla\varphi_t\cdot\nabla\varphi dx.
$$
The third term on the l.h.s. of (\ref{3.2}) takes the form
$$
\intop_\Omega v\cdot\nabla u\cdot udx-\intop_\Omega v\cdot\nabla u\cdot\nabla\varphi dx\equiv I_1+I_2,
$$
where
$$
I_1={1\over2}\intop_\Omega v\cdot\nabla u^2dx=-{1\over2}\intop_\Omega\divv vu^2dx=-{1\over2}\intop_\Omega\Delta\varphi u^2dx.
$$
Hence
$$
|I_1|\le\varepsilon|u|_6^2+c/\varepsilon|u|_3^2|\Delta\varphi|_2^2.
$$
Next,
$$
I_2=-\intop_\Omega v\cdot\nabla(u\cdot\nabla\varphi)dx+\intop_\Omega v\cdot\nabla\nabla\varphi\cdot udx\equiv I_{21}+I_{22},
$$
where
$$
I_{21}=\intop_\Omega\Delta\varphi u\cdot\nabla\varphi dx
$$
and
$$
|I_{21}|\le\varepsilon|u|_6^2+c/\varepsilon|\nabla\varphi|_3^2|\Delta\varphi|_2^2.
$$
Finally,
$$
|I_{22}|\le\varepsilon|u|_6^2+c/\varepsilon|\nabla^2\varphi|_2^2|v|_3^2.
$$
Consider the fourth term on the l.h.s. of (\ref{3.2}). It takes the form
$$\eqal{
&\intop_\Omega u\cdot\nabla(v-u)\cdot\bar udx=\intop_\Omega u\cdot\nabla v\cdot\bar udx-\intop_\Omega u\cdot\nabla u\cdot\bar udx\cr
&=\intop_\Omega u\cdot\nabla v(u-\nabla\varphi)dx-\intop_\Omega u\cdot\nabla u\cdot(u-\nabla\varphi)dx\cr
&=\intop_\Omega u\cdot\nabla v\cdot udx-\intop_\Omega u\cdot\nabla v\cdot\nabla\varphi dx-\intop_\Omega u\cdot\nabla u\cdot udx\cr
&\quad+\intop_\Omega u\cdot\nabla u\cdot\nabla\varphi dx\equiv J_1+J_2+J_3+J_4.\cr}
$$
Integration by parts in $J_1$ yields
$$
J_1=\intop_\Omega u\cdot\nabla(v\cdot u)dx-\intop_\Omega u\cdot\nabla uvdx \equiv J_{11}+J_{12},
$$
where
$$
J_{11}=-\intop_\Omega\Delta\varphi v\cdot udx
$$
and
$$
|J_{11}|\le\varepsilon|u|_6^2+c/\varepsilon|\Delta\varphi|_2^2|v|_3^2.
$$
Next,
$$
|J_{12}|\le\varepsilon|\nabla u|_2^2+c/\varepsilon|u|_3^2|v|_6^2.
$$
Integration by parts in $J_2$ implies
$$\eqal{
J_2&=\intop_\Omega u\cdot\nabla(v\cdot\nabla\varphi)dx-\intop_\Omega u\cdot\nabla\nabla\varphi\cdot vdx\cr
&\equiv-\intop_\Omega\Delta\varphi v\cdot\nabla\varphi dx-\intop_\Omega u\cdot\nabla\nabla\varphi\cdot vdx\cr
&\equiv J_{21}+J_{22},\cr}
$$
where
$$
|J_{21}|\le\varepsilon|\Delta\varphi|_2^2+c/\varepsilon|v|_3^2|\nabla\varphi|_6^2
$$
and
$$
|J_{22}|\le\varepsilon|u|_6^2+c/\varepsilon|\nabla^2\varphi|_2^2|v|_3^2.
$$
Next, we consider $J_3$
$$
J_3=-{1\over2}\intop_\Omega u\cdot\nabla u^2dx={1\over2}\intop_\Omega\Delta\varphi u^2dx.
$$
Hence
$$
|J_3|\le\varepsilon|u|_6^2+c/\varepsilon|u|_3^2|\Delta\varphi|_2^2.
$$
Finally,
$$
|J_4|\le\varepsilon|\nabla u|_2^2+c/\varepsilon|u|_3^2|\nabla\varphi|_6^2.
$$
The last term on the l.h.s. of (\ref{3.2}) equals
$$
\mu|\nabla u|_2^2-\mu\intop_\Omega\nabla u\cdot\nabla^2\varphi dx,
$$
where the second integral is bounded by
$$
\varepsilon|\nabla u|_2^2+c/\varepsilon|\nabla^2\varphi|_2^2.
$$
Consider the first term on the r.h.s. of (\ref{3.2}).

\noindent
Introducing potentials $\psi_g$, $\varphi_g$ such that $g_r=\rot\psi_g$, $g_g=\nabla\varphi_g$ we have
$$
g=f-F=f_r+f_g-F=f_r-F+f_g\equiv g_r+g_g.
$$
Then
$$
\intop_\Omega g\cdot\bar udx=\intop_\Omega g_r\cdot\bar udx=\intop_\Omega g_r\cdot udx-\intop_\Omega g_r\cdot\nabla\varphi dx\equiv K_1+K_2,
$$
where
$$\eqal{
&|K_1|\le\varepsilon|u|_6^2+c/\varepsilon|g_r|_{6/5}^2,\cr
&|K_2|\le|g_r|_2^2+|\nabla\varphi|_2^2.\cr}
$$
The second integral on the r.h.s. of (\ref{3.2}) takes the form
$$
\intop_\Omega\eta f\cdot(u-\nabla\varphi)dx=\intop_\Omega\eta f\cdot udx-\intop_\Omega\eta f\cdot\nabla\varphi dx\equiv K_3+K_4,
$$
where
$$\eqal{
&|K_3|\le\varepsilon|u|_6^2+c/\varepsilon|\eta|_2^2|f|_3^2,\cr
&|K_4|\le|f|_3|\eta|_6|\nabla\varphi|_2.\cr}
$$
Finally, we examine the second term on the l.h.s. of (\ref{3.2})
$$\eqal{
&\intop_\Omega\eta(v_t+v\cdot\nabla v)\cdot(u-\nabla\varphi)dx\cr
&=\intop_\Omega\eta (v_t+v\cdot\nabla v)\cdot udx-\intop_\Omega\eta(v_t+v\cdot\nabla v)\cdot\nabla\varphi dx\cr
&=\intop_\Omega\eta v_t\cdot udx+\intop_\Omega\eta v\cdot\nabla v\cdot udx-\intop_\Omega\eta v_t\cdot\nabla\varphi dx-\intop_\Omega\eta v\cdot\nabla v\cdot\nabla\varphi dx\cr
&\equiv L_1+L_2+L_3+L_4.\cr}
$$
Continuing, we have
$$
|L_1|\le\varepsilon|u|_6^2+c/\varepsilon|v_t|_2^2|\eta|_3^2.
$$
To estimate $L_2$ we integrate by parts to get
$$\eqal{
L_2&=\intop_\Omega v\cdot\nabla v\cdot u\eta dx\cr
&=\intop_\Omega v\cdot\nabla(v\cdot u\eta)dx-\intop_\Omega v\cdot\nabla u\cdot v\eta dx-\intop_\Omega v\cdot\nabla\eta v\cdot udx\cr
&\equiv L_{21}+L_{22}+L_{23}.\cr}
$$
Integrating by parts in $L_{21}$ gives
$$
L_{21}=-\intop_\Omega\Delta\varphi v\cdot u\eta dx.
$$
Hence
$$\eqal{
&|L_{21}|\le\varepsilon|u|_6^2+ c/\varepsilon|v|_6^2|\eta|_6^2|\Delta\varphi|_2^2,\cr
&|L_{22}|\le\varepsilon|\nabla u|_2^2+c/\varepsilon|v|_6^4|\eta|_6^2,\cr
&|L_{23}|\le\varepsilon|u|_6^2+c/\varepsilon|v|_6^4|\nabla\eta|_2^2.\cr}
$$
Next
$$
|L_3|\le|v_t|_2^2|\eta|_3^2+\|\nabla\varphi\|_1^2.
$$
Integrating by parts in $L_4$ yields
$$\eqal{
L_4&=-\intop_\Omega v\cdot\nabla v\cdot\nabla\varphi\eta dx=-\intop_\Omega v\cdot\nabla(v\cdot\nabla\varphi\eta)dx\cr
&\quad+\intop_\Omega v\cdot\nabla\nabla\varphi\cdot v\eta dx+\intop_\Omega v\cdot\nabla\eta v\cdot\nabla\varphi dx\cr
&\equiv L_{41}+L_{42}+L_{43},\cr}
$$
where
$$
L_{41}=\intop_\Omega\divv vv\cdot\nabla\varphi\eta dx=\intop_\Omega\Delta\varphi v\cdot\nabla\varphi\eta dx.
$$
Therefore,
$$
|L_{41}|\le|\Delta\varphi|_2^2+|\nabla\varphi|_6^2|\eta|_6^2|v|_6^2.
$$
Similarly,
$$
|L_{42}|\le|v|_6^2(|\nabla^2\varphi|_2^2+|\eta|_6^2),\quad |L_{43}|\le|v|_6^2(|\nabla\eta|_2^2+|\nabla\varphi|_6^2).
$$
Summarizing the above estimates and assuming that $\varepsilon$ is sufficiently small implies the inequality
$$\eqal{
&{d\over dt}|u|_2^2+\mu|\nabla u|_2^2\le\varepsilon|u|_6^2+ c\bigg({1\over\varepsilon}+1\bigg)|u|_3^2(\|\nabla\varphi\|_1^2+|v|_6^2)\cr
&\quad+c\bigg({1\over\varepsilon}+1\bigg)(|\nabla\varphi_t|_2^2+ \|\nabla\varphi\|_1^2+\|\nabla\varphi\|_1^4)\cr
&\quad+c\bigg[|v|_3^2\|\nabla\varphi\|_1^2+|v|_6^4 \|\eta\|_1^2+|v|_6^2(\|\eta\|_1^2\|\nabla\varphi\|_1^2+ \|\eta\|_1^2+\|\nabla\varphi\|_1^2)\cr
&\quad+c|v_t|_2^2|\eta|_3^2+c|g_r|_2^2+c|\eta|_6^2 |f|_3^2\bigg].\cr}
$$
Applying the Poincar\'e inequality and using that $\varepsilon$ is sufficiently small the above inequality implies
\begin{equation}\eqal{
&{d\over dt}|u|_2^2+\mu\|u\|_1^2\le c|u|_3^2(\|\nabla\varphi\|_1^2+|v|_6^2)+ c(|\nabla\varphi_t|_2^2+\|\nabla\varphi\|_1^2\cr
&\quad+\|\nabla\varphi\|_1^4)+c\bigg[|v|_3^2\|\nabla\varphi\|_1^2+ |v|_6^4\|\eta\|_1^2+ |v|_6^2\|\eta\|_1^2\|\nabla\varphi\|_1^2\cr
&\quad+|v|_6^2(\|\nabla\varphi\|_1^2+\|\eta\|_1^2)+(|v_t|_2^2|\eta|_3^2+ |g_r|_2^2+|\eta|_6^2|f|_3^2)+\bigg|\intop_\Omega udx\bigg|^2\bigg].\cr}
\label{3.3}
\end{equation}
Using (\ref{2.8}) in (\ref{3.3}) implies (\ref{3.1}) and concludes the proof.
\end{proof}

\begin{lemma}\label{l3.2}
Assume that $v\in\Gamma_1^2(\Omega)$, $\nabla\varphi\in\Gamma_1^2(\Omega)$, $\eta\in H^2(\Omega)$, $g_r\in H^1(\Omega)$, $f\in H^1(\Omega)$.\\
Then
\begin{equation}\eqal{
&{d\over dt}|u_x|_2^2+\mu\|\nabla u\|_1^2\le c|u_x|_2^6+c\|u\|_1^2(\|v\|_2^2+ \|\nabla\varphi\|_2^2)+|v|_6^4\|\eta\|_2^2\cr
&\quad+c|v|_6^2(\|\nabla\varphi\|_2^2+\|\eta\|_2^2+\|\nabla\varphi\|_2^2 \|\eta\|_2^2+\|\eta\|_2^2\|v\|_2^2+c\|v\|_2^2\|\nabla\varphi\|_2^2+\|\eta\|_2^2)\cr
&\quad+\|v_t\|_1^2\|\eta\|_2^2+c(\|\nabla\varphi_t\|_1^2+\|\nabla\varphi\|_2^2+ \|g_r\|_1^2\cr
&\quad+\|f\|_1^2\|\eta\|_1^2).\cr}
\label{3.4}
\end{equation}
\end{lemma}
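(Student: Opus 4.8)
The plan is to derive a first spatial-order analogue of the energy identity from Lemma~\ref{l3.1}. As in that proof I would set $\bar u=u-\nabla\varphi$, so that $\divv\bar u=0$, then differentiate the momentum equation (\ref{1.6}) with respect to $x_k$, multiply by $\partial_{x_k}\bar u$, integrate over $\Omega$ and sum over $k$. Because $\divv\bar u=0$, and hence $\divv(\partial_{x_k}\bar u)=0$, the pressure term $\nabla q$ and the term $\nu\nabla\divv v=\nu\nabla\Delta\varphi$ integrate out entirely after integration by parts. This is precisely why neither $q$ nor the large coefficient $\nu$ appears in (\ref{3.4}).

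The leading good terms I would extract are, from the time derivative,
$$
a\sum_k\intop_\Omega\partial_{x_k}u_t\cdot\partial_{x_k}\bar u\,dx={a\over2}{d\over dt}|\nabla u|_2^2-a\sum_k\intop_\Omega\partial_{x_k}u_t\cdot\partial_{x_k}\nabla\varphi\,dx,
$$
the second piece being absorbed into a $\|\nabla\varphi_t\|_1^2$ contribution after integration by parts, and from the viscous term
$$
-\mu\sum_k\intop_\Omega\Delta\partial_{x_k}u\cdot\partial_{x_k}\bar u\,dx=\mu|\nabla^2u|_2^2-\mu\sum_k\intop_\Omega\nabla\partial_{x_k}u:\nabla\partial_{x_k}\nabla\varphi\,dx,
$$
whose cross term contributes to $\|\nabla\varphi\|_2^2$. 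Together with the Poincar\'e inequality these recover the left-hand side $\mu\|\nabla u\|_1^2$ of (\ref{3.4}).

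All remaining terms I would bound exactly as in Lemma~\ref{l3.1}, but one derivative higher: every product would be split using $\bar u=u-\nabla\varphi$, integrated by parts to move a derivative off the highest-order factor, and then estimated by H\"older's inequality together with the embedding $H^1(\Omega)\hookrightarrow L_6(\Omega)$ and Young's inequality with a small parameter $\varepsilon$, so that the top-order factor $|\nabla^2u|_2^2$ is absorbed on the left. This mechanism is what produces the coefficients $\|v\|_2^2$, $\|\nabla\varphi\|_2^2$, $\|\eta\|_2^2$, $\|v_t\|_1^2$ and the forcing contributions $\|g_r\|_1^2$, $\|f\|_1^2\|\eta\|_1^2$ in (\ref{3.4}).

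The main obstacle is the genuinely nonlinear self-interaction coming from $u\cdot\nabla V=u\cdot\nabla v-u\cdot\nabla u$: after differentiation the term $u\cdot\nabla u$ tested against $\nabla u$ yields, up to lower order, $\intop_\Omega\nabla u\cdot\nabla u\cdot\nabla u\,dx$, bounded by $|\nabla u|_3^3$. Unlike every other term, this one cannot be made small by the smallness of $\eta$, $\nabla\varphi$ or the data, so it must be handled by interpolation. Using the Gagliardo--Nirenberg inequality $|\nabla u|_3\le c|\nabla u|_2^{1/2}|\nabla^2u|_2^{1/2}$ gives $|\nabla u|_3^3\le c|\nabla u|_2^{3/2}|\nabla^2u|_2^{3/2}$, and Young's inequality then produces $\varepsilon|\nabla^2u|_2^2+c|\nabla u|_2^6$. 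The first piece is absorbed by the viscous term and the second is exactly the cubic term $c|u_x|_2^6$ in (\ref{3.4}). This single supercritical term is what will later force the smallness of $\gamma$ in Theorem~B and prevents the estimate from closing unconditionally; collecting it with the bounds above and choosing $\varepsilon$ small yields (\ref{3.4}).
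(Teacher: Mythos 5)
Your proposal follows essentially the same route as the paper: differentiate (\ref{1.6}) in $x$, test with $\bar u_x$ where $\bar u=u-\nabla\varphi$ (so the pressure and $\nu\nabla\divv v$ terms drop out), estimate the many lower-order products by H\"older, the embedding $H^1\hookrightarrow L_6$ and Young's inequality, and treat the one supercritical term $\intop_\Omega u_x\cdot\nabla u\cdot u_x\,dx$ via $|u_x|_3^3\le c|u_{xx}|_2^{3/2}|u_x|_2^{3/2}\le\varepsilon|u_{xx}|_2^2+c|u_x|_2^6$, exactly as the paper does for its term $L_{32}$. The only cosmetic difference is the viscous cross term, which the paper evaluates exactly as $-\mu|\Delta\varphi_x|_2^2$ rather than absorbing it by Young's inequality; both yield the same right-hand side.
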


\begin{proof}
Differentiate (\ref{1.6}) with respect to $x$, multiply the result by $\bar u_x$ and integrate over $\Omega$. Then we have
\begin{equation}\eqal{
&a\intop_\Omega u_{xt}\cdot\bar u_xdx+\intop_\Omega[\eta(v_t+v\cdot\nabla v)]_{,x}\cdot\bar u_xdx+a\intop_\Omega[v\cdot\nabla u]_{,x}\cdot\bar u_xdx\cr
&\quad+a\intop_\Omega[u\cdot\nabla(v-u)]_{,x}\cdot\bar u_xdx-\mu\intop_\Omega \Delta u_x\cdot\bar u_xdx\cr
&=a\intop_\Omega g_x\cdot\bar u_xdx+\intop_\Omega(\eta f)_{,x}\bar u_xdx.\cr}
\label{3.5}
\end{equation}
Next, we examine the particular terms in (\ref{3.5}). In these considerations we omit $a$. The first term on the l.h.s. of (\ref{3.5}) equals
$$
\intop_\Omega u_{xt}\cdot u_xdx-\intop_\Omega u_{xt}\cdot\nabla\varphi_xdx\equiv I_1+I_2,
$$
where $I_1={1\over2}{d\over dt}|u_x|_2^2$ and we integrate by parts in $I_2$ to derive
$$
I_2=\intop_\Omega\Delta\varphi_{xt}\varphi_xdx=-\intop_\Omega\nabla\varphi_{xt} \cdot\nabla\varphi_xdx.
$$
Hence,
$$
|I_2|\le|\nabla\varphi_{xt}|_2^2+|\nabla\varphi_x|_2^2.
$$
The second term on the l.h.s. of (\ref{3.5}) equals
$$\eqal{
&\intop_\Omega\eta_x(v_t+v\cdot\nabla v)\cdot\bar u_xdx+\intop_\Omega\eta(v_{xt}+v_x\cdot\nabla v+v\cdot\nabla v_x)\cdot\bar u_xdx\cr
&=\intop_\Omega\eta_xv_t\cdot u_xdx-\intop_\Omega\eta_xv_t\cdot\nabla\varphi_xdx\cr
&\quad+\intop_\Omega\eta_xv\cdot\nabla v\cdot u_xdx-\intop_\Omega\eta_xv\cdot\nabla v\cdot\nabla\varphi_xdx\cr
&\quad+\intop_\Omega\eta v_{xt}\cdot u_xdx-\intop_\Omega\eta v_{xt}\cdot\nabla\varphi_xdx\cr
&\quad+\intop_\Omega\eta v_x\cdot\nabla v\cdot u_xdx-\intop_\Omega\eta v_x\cdot\nabla v\cdot\nabla\varphi_xdx\cr
&\quad+\intop_\Omega\eta v\cdot\nabla v_x\cdot u_xdx-\intop_\Omega\eta v\cdot\nabla v_x\cdot\nabla\varphi_xdx\equiv\sum_{i=1}^{10}J_i.\cr}
$$
Now, we estimate the terms $J_i$, $i=1,\dots,10$. First we have
$$\eqal{
&|J_1|\le\varepsilon|u_x|_6^2+c/\varepsilon|\eta_x|_3^2|v_t|_2^2,\cr
&|J_2|\le|\eta_x|_6|v_t|_3|\nabla\varphi_x|_2\le|v_t|_3^2\|\eta\|_1^2+ \|\nabla\varphi\|_1^2.\cr}
$$
To examine $J_3$ we integrate by parts. Then we obtain
$$\eqal{
J_3&=\intop_\Omega v\cdot\nabla v\cdot u_x\eta_xdx=\intop_\Omega v\cdot\nabla(v\cdot u_x\eta_x)dx-\intop_\Omega v\cdot\nabla u_x\cdot v\eta_xdx\cr
&\quad-\intop v\cdot\nabla\eta_xv\cdot u_xdx=J_{31}+J_{32}+J_{33}.\cr}
$$
Integration by parts in $J_{31}$ implies
$$
J_{31}=-\intop_\Omega\Delta\varphi v\cdot u_x\eta_xdx.
$$
Hence
$$
|J_{31}|\le\varepsilon|u_x|_6^2+c/\varepsilon|\eta_x|_6^2|\Delta\varphi|_2^2 |v|_6^2.
$$
Next,
$$\eqal{
&|J_{32}|\le\varepsilon|\nabla u_x|_2^2+c/\varepsilon|v|_6^4|\eta_x|_6^2,\cr
&|J_{33}|\le\varepsilon|u_x|_6^2+c/\varepsilon|v|_6^4|\nabla\eta_x|_2^2.\cr}
$$
To examine $J_4$ we integrate by parts. Then we have
$$\eqal{
J_4&=-\intop_\Omega v\cdot\nabla v\cdot\nabla\varphi_x\eta_xdx=-\intop_\Omega v\cdot\nabla(v\cdot\nabla\varphi_x\eta_x)dx\cr
&\quad+\intop_\Omega v\cdot\nabla\nabla\varphi_x\cdot v\eta_xdx
+\intop_\Omega v\cdot\nabla\eta_x\nabla\varphi_x\cdot vdx\cr
&\equiv J_{41}+J_{42}+J_{43}.\cr}
$$
Integrating by parts in $J_{41}$ implies
$$
J_{41}=\intop_\Omega\Delta\varphi v\cdot\nabla\varphi_x\eta_xdx.
$$
Therefore,
$$
|J_{41}|\le|\Delta\varphi|_2^2+|v|_6^2|\eta_x|_6^2|\nabla\varphi_x|_6^2.
$$
Continuing
$$
|J_{42}|\le|v|_6^2(|\nabla^2\varphi_x|_2|\eta_x|_6)\le|v|_6^2 (|\nabla\varphi_{xx}|_2^2+\|\eta\|_2^2)
$$
and
$$
|J_{43}|\le|v|_6^2|\nabla\eta_x|_2|\nabla\varphi_x|_6\le|v|_6^2(\|\eta\|_2^2+ \|\nabla\varphi\|_2^2).
$$
Estimating $J_5$ yields
$$
|J_5|\le\varepsilon|u_x|_6^2+c/\varepsilon|\eta|_3^2|v_{xt}|_2^2.
$$
Next
$$
|J_6|\le|\eta|_\infty^2|v_{xt}|_2^2+|\nabla\varphi_x|_2^2\le\|v_t\|_1^2 \|\eta\|_2^2+\|\nabla\varphi\|_1^2.
$$
Integration by parts in $J_7$ implies
$$\eqal{
J_7&=\intop_\Omega v_x\cdot\nabla(v\cdot u_x\eta)dx-\intop_\Omega v_x\cdot\nabla u_x\cdot v\eta dx-\intop_\Omega v_x\cdot\nabla\eta u_x\cdot vdx\cr
&\equiv J_{71}+J_{72}+J_{73},\cr}
$$
where
$$
J_{71}=-\intop_\Omega\divv v_xv\cdot u_x\eta dx=-\intop_\Omega\Delta\varphi_xv\cdot u_x\eta dx.
$$
Hence
$$
|J_{71}|\le\varepsilon|u_x|_6^2+c/\varepsilon|\Delta\varphi_x|_2^2|v|_6^2|\eta|_6^2
$$
and
$$\eqal{
&|J_{72}|\le\varepsilon|\nabla u_x|_2^2+c/\varepsilon|v_x|_6^2|\eta|_6^2|v|_6^2,\cr
&|J_{73}|\le\varepsilon|u_x|_6^2+c/\varepsilon|\nabla\eta|_6^2|v_x|_2^2|v|_6^2.\cr}
$$
Continuing, we integrate by parts in $J_8$ to derive
$$\eqal{
J_8&=-\intop_\Omega v_x\cdot\nabla(v\cdot\nabla\varphi_x\eta)dx+\intop_\Omega v_x\cdot\nabla\nabla\varphi_x\cdot v\eta dx+\intop_\Omega v_x\cdot\nabla\eta\nabla\varphi_x\cdot vdx\cr
&\equiv J_{81}+J_{82}+J_{83}.\cr}
$$
Integration by parts in $J_{81}$ yields
$$
J_{81}=\intop_\Omega\Delta\varphi_xv\cdot\nabla\varphi_x\eta dx,
$$
so
$$
|J_{81}|\le|\Delta\varphi_x|_2^2+|v|_6^2|\nabla\varphi_x|_6^2|\eta|_6^2.
$$
Estimations of other terms in $J_8$ implies
$$\eqal{
&|J_{82}|\le|\nabla^2\varphi_x|_2^2+|v_x|_3^2|\eta|_\infty^2|v|_6^2,\cr
&|J_{83}|\le|\nabla\varphi_x|_6^2+|v_x|_2^2|\nabla\eta|_6^2|v|_6^2.\cr}
$$
Finally,
$$\eqal{
&|J_9|\le\varepsilon|u_x|_6^2+c/\varepsilon|\nabla v_x|_2^2|v|_6^2|\eta|_6^2,\cr
&|J_{10}|\le|\nabla\varphi_x|_6^2+c|\nabla v_x|_2^2|v|_6^2|\eta|_6^2.\cr}
$$
The third term on the l.h.s. of (\ref{3.5}) takes the form
$$\eqal{
&\intop_\Omega v_x\cdot\nabla u\cdot u_xdx+\intop_\Omega v\cdot\nabla u_x\cdot u_xdx-\intop_\Omega v_x\cdot\nabla u\cdot\nabla\varphi_xdx\cr
&\quad-\intop_\Omega v\cdot\nabla u_x\cdot\nabla\varphi_xdx\equiv K_1+K_2+K_3+K_4.\cr}
$$
Integrating by parts with respect to $x$ in $K_1$ yields
$$
K_1+K_2=-\intop_\Omega v\cdot\nabla u\cdot u_{xx}dx.
$$
Then
$$
|K_1+K_2|\le\varepsilon|u_{xx}|_2^2+c/\varepsilon|\nabla u|_2^2|v|_\infty^2.
$$
Similarly,
$$
K_3+K_4=\intop_\Omega v\cdot\nabla u\cdot\nabla\varphi_{xx}dx
$$
so
$$
|K_3+K_4|\le\varepsilon|\nabla u|_6^2+c/\varepsilon|\nabla\varphi_{xx}|_2^2|v|_3^2.
$$
The fourth term on the l.h.s. of (\ref{3.5}) has the form
$$\eqal{
&\intop_\Omega(u\cdot\nabla v)_x\cdot(u_x-\nabla\varphi_x)dx-\intop_\Omega(u\cdot\nabla u)_x(u_x-\nabla\varphi_x)dx\cr
&=\intop_\Omega(u\cdot\nabla v)_x\cdot u_xdx-\intop_\Omega(u\cdot\nabla v)_x\cdot\nabla\varphi_xdx-\intop_\Omega(u\cdot\nabla u)_x\cdot u_xdx\cr
&\quad+\intop_\Omega(u\cdot\nabla u)_x\cdot\nabla\varphi_xdx\equiv L_1+L_2+L_3+L_4.\cr}
$$
First we consider
$$
L_1=\intop_\Omega u_x\cdot\nabla v\cdot u_xdx+\intop_\Omega u\cdot\nabla v_x\cdot u_xdx\equiv L_{11}+L_{12},
$$
where
$$\eqal{
&|L_{11}|\le\varepsilon|u_x|_6^2+c/\varepsilon|u_x|_2^2|\nabla v|_3^2,\cr
&|L_{12}|\le\varepsilon|u_x|_6^2+c/\varepsilon|\nabla v_x|_2^2|u|_3^2.\cr}
$$
Next, we examine
$$
L_2=-\intop_\Omega u_x\cdot\nabla v\cdot\nabla\varphi_xdx-\intop_\Omega u\cdot\nabla v_x\cdot\nabla\varphi_xdx\equiv L_{21}+L_{22},
$$
where
$$\eqal{
&|L_{21}|\le\varepsilon|u_x|_6^2+c/\varepsilon|\nabla v|_2^2|\nabla\varphi_x|_3^2,\cr
&|L_{22}|\le\varepsilon|u|_\infty^2+c/\varepsilon|\nabla v_x|_2^2|\nabla\varphi_x|_2^2.\cr}
$$
Next, we examine $L_3$. We express it in the form
$$
L_3=-\intop_\Omega u\cdot\nabla u_x\cdot u_xdx-\intop_\Omega u_x\cdot\nabla u\cdot u_xdx\equiv L_{31}+L_{32},
$$
where
$$
L_{31}=-{1\over2}\intop_\Omega u\cdot\nabla|u_x|^2dx={1\over2}\intop_\Omega\Delta\varphi u_x^2dx.
$$
Hence
$$
|L_{31}|\le\varepsilon|u_x|_6^2+c/\varepsilon|\Delta\varphi|_3^2|u_x|_2^2
$$
and
$$
|L_{32}|\le|u_x|_3^3\le c|u_{xx}|_2^{3/2}|u_x|_2^{3/2}\le\varepsilon|u_{xx}|_2^2+c/\varepsilon|u_x|_2^6.
$$
Finally, we examine
$$
L_4=\intop_\Omega u\cdot\nabla u_x\cdot\nabla\varphi_xdx+\intop_\Omega u_x\cdot\nabla u\cdot\nabla\varphi_xdx\equiv L_{41}+L_{42}.
$$
Continuing, we have
$$\eqal{
&|L_{41}|\le\varepsilon|\nabla u_x|_2^2+c/\varepsilon|\nabla\varphi_x|_3^2|u|_6^2,\cr
&|L_{42}|\le\varepsilon|\nabla u|_6^2+c/\varepsilon|u_x|_2^2|\nabla\varphi_x|_3^2.\cr}
$$
The last term on the l.h.s. of (\ref{3.5}) equals
$$
\mu\intop_\Omega|\nabla u_x|^2dx+\mu\intop_\Omega\Delta u_x\nabla\varphi_xdx,
$$
where the second term is treated in the way
$$\eqal{
\mu\intop_\Omega\Delta u_x\cdot\nabla\varphi_xdx&=-\mu\intop_\Omega\Delta\divv u_x\varphi_xdx=-\mu\intop_\Omega\Delta^2\varphi_x\varphi_xdx\cr
&= -\intop_\Omega|\Delta\varphi_x|^2dx.\cr}
$$
The first term on the r.h.s. of (\ref{3.5}) equals
$$
\intop_\Omega g_{rx}(u-\nabla\varphi)_xdx=\intop_\Omega g_{rx}u_xdx-\intop_\Omega g_{rx}\nabla\varphi_xdx\equiv M_1+M_2,
$$
where
$$\eqal{
&|M_1|\le\varepsilon|u_x|_2^2+c/\varepsilon|g_{rx}|_2^2,\cr
&|M_2|\le|g_{rx}|_2^2+|\nabla\varphi_x|_2^2.\cr}
$$
Finally, the last term on the r.h.s. of (\ref{3.5}) assumes the form
$$
\intop_\Omega(\eta_xf+\eta f_x)\cdot u_xdx-\intop_\Omega(\eta_xf+\eta f_x)\cdot\nabla\varphi_xdx\equiv N_1+N_2,
$$
where
$$\eqal{
&|N_1|\le\varepsilon|u_x|_6^2+c/\varepsilon(|\eta_x|_2^2|f|_3^2+ |\eta|_3^2|f_x|_2^2),\cr
&|N_2|\le|\nabla\varphi_x|_6\|\eta\|_1\|f\|_1.\cr}
$$
Employing the above estimates in (\ref{3.5}) and using that $\varepsilon$ is sufficiently small we obtain the inequality
\begin{equation}\eqal{
&{d\over dt}|u_x|_2^2+\mu\|u_x\|_1^2\le c|u_x|_2^6+c\|u\|_1^2(\|v\|_2^2+\|\nabla\varphi\|_2^2)+|v|_6^4 \|\eta\|_2^2\cr
&\quad+c\|v\|_2^2(\|\nabla\varphi\|_2^2+\|\eta\|_2^2)+c|v|_6^2(\|\nabla\varphi\|_2^2+\|\eta\|_2^2 +\|\nabla\varphi\|_2^2\|\eta\|_2^2\cr
&\quad+\|\eta\|_2^2\|v\|_2^2)+c\|v_t\|_1^2\|\eta\|_2^2+c(\|\nabla\varphi_t\|_1^2+ \|\nabla\varphi\|_2^2+\|g_r\|_1^2+\|f\|_1^2\|\eta\|_1^2).\cr}
\label{3.6}
\end{equation}
This implies (\ref{3.4}) and concludes the proof.
\end{proof}

\noindent
Let
$$\eqal{
B_1(t)&=\|\nabla\varphi(t)\|_1^2+|v(t)|_6^2,\cr
G_1^2(t)&=|\nabla\varphi_t|_2^2+\|\nabla\varphi\|_1^2+\|\nabla\varphi\|_1^4+ |v|_6^2(\|\nabla\varphi\|_1^2\cr
&\quad+\|\eta\|_1^2+\|\eta\|_1^2\|\nabla\varphi\|_1^2)+|v|_6^4\|\eta\|_1^2+ |v_t|_2^2|\eta|_3^2+|g_r|_2^2\cr
&\quad+|\eta|_6^2|f|_3^2+c\bigg[(|\eta|_2^2+|\eta_0|_2^2)A_1^2+ \bigg|\intop_{\Omega^t}gdxdt'\bigg|^2\bigg]+\bigg|\intop_\Omega u_0dx\bigg|^2.\cr}
$$

\begin{remark}\label{r3.3}
In view of the above notation we express (\ref{3.1}) in the form
\begin{equation}
{d\over dt}|u|_2^2+\mu\|u\|_1^2\le c|u|_3^2B_1(t)+cG_1^2(t)
\label{3.7}
\end{equation}
In view of interpolation
$$
|u|_3\le c|u_x|_2^{1/2}|u|_2^{1/2}
$$
we derive from (\ref{3.7}) the inequality
\begin{equation}
{d\over dt}|u|_2^2+\mu\|u\|_1^2\le c|u|_2^2B_1^2+cG_1^2(t).
\label{3.8}
\end{equation}
Consider (\ref{3.8}) in the time interval $[kT,(k+1)T]$, $k\in\N_0=\N\cup\{0\}$. Moreover, we assume that in each time interval $[kT,(k+1)T]$ the first part of Theorem B holds which is proved step by step in time.

\noindent
We use that
$$\eqal{
&\intop_{kT}^{(k+1)T}B_1^2(t)dt\le\sup_{t\in[kT,(k+1)T]}\|\nabla\varphi(t)\|_1^2 \intop_{kT}^{(k+1)T}\|\nabla\varphi(t)\|_1^2dt\cr
&\quad+\sup_{t\in[kT,(k+1)T]}|v(t)|_6^2\intop_{kT}^{(k+1)T}|v(t)|_6^2dt\le {c\over\nu^4}+A_1^2\bigg({c\over\nu^{2/3}}+{c\over\nu^{2\alpha/3}}+A_2^2\bigg),\cr}
$$
where we used estimate for $\sup_t|v(t)|_6$ from (2.95) in \cite{Z1}, $\alpha>0$ and $A_2=|f|_{18/7,6,\Omega^t}+|\varrho_0|_\infty^{1/6}|v_0|_6$.

\noindent
Moreover
$$
\intop_{kT}^{(k+1)T}G_1^2(t)dt\le{c\over\nu}+ce^{-\alpha kT},
$$
where we used that $|\eta(0)|_{2,1}\le{c\over\nu}$, $|g(t)|_2\le\gamma_0e^{-\alpha t}$. We express (\ref{3.8}) in the form
$$
{d\over dt}|u|_2^2+\mu_1|u|_2^2+\mu_2\|u\|_1^2\le c|u|_2^2B_1^2+cG_1^2,
$$
where $\mu=\mu_1+\mu_2$, $\mu_i>0$, $i=1,2$. Then we obtain
$$\eqal{
&{d\over dt}\bigg [|u|_2^2\exp\bigg(\mu_1(t-kT)-c\intop_{kT}^tB_1^2(t')dt'\bigg)\bigg]\cr
&\quad+\mu_2\|u\|_1^2\exp\bigg(\mu_1(t-kT)-c\intop_{kT}^tB_1^2(t')dt'\bigg)\cr
&\le cG_1^2\exp\bigg(\mu_1(t-kT)-c\intop_{kT}^tB_1^2(t')dt'\bigg).\cr}
$$
Integrating the above inequality from $t=kT$ to $t\in(kT,(k+1)T]$ we derive
\begin{equation}\eqal{
&|u(t)|_2^2+\mu_2\intop_{kT}^t\|u(t')\|_1^2\exp[\mu_1(t'-t)]dt'\cr
&\le c\exp\bigg(c\intop_{kT}^tB_1^2(t')dt'\bigg)\intop_{kT}^tG_1^2(t')dt'\cr
&\quad+c |u(kT)|_2^2\exp\bigg[-\mu_1(t-kT)+c\intop_{kT}^tB_1^2(t')dt'\bigg].\cr}
\label{3.9}
\end{equation}
Setting $t=(k+1)T$ we obtain
$$\eqal{
|u((k+1)T)|_2^2&\le c\exp\bigg(c\intop_{kT}^{(k+1)T}B_1^2(t)dt\bigg)\intop_{kT}^{(k+1)T}G_1^2(t)dt\cr
&\quad+c\exp\bigg(-\mu_1T+c\intop_{kT}^{(k+1)T}B_1^2(t)dt\bigg)\cdot|u(kT)|_2^2.\cr}
$$
Since
$$
\intop_{kT}^{(k+1)T}B_1^2(t)dt\le{c\over\nu^2}+A_1^2A_2^2,
$$
$T$ close to $\nu$ and $\nu$ large we have that there exists a constant $\mu_0>0$ such that
$$
-\mu_1T+c\bigg({1\over\nu}+A_1^2A_2^2\bigg)\le-\mu_0T.
$$
Therefore, we have
$$\eqal{
&|u((k+1)T)|_2^2\le c\exp\bigg(c\bigg({1\over\nu^2}+A_1^2A_2^2\bigg)\bigg) \bigg({1\over\nu}+\gamma_0\exp(-\alpha kT)\bigg)\cr
&\quad+\exp(-\mu_0T)|u(kT)|_2^2.\cr}
$$
Hence, by iteration we get
\begin{equation}\eqal{
|u(kT)|_2^2&\le {c\exp\left (c\left ({1\over\nu^2}+A_1^2A_2^2\right )\right ) \left ({1\over\nu}+\gamma_0\exp(-\alpha kT)\right )\over 1-e^{-\mu_0T}}\cr
&\quad+\exp(-\mu_0kT)|u(0)|_2^2\equiv D_1^2(k).\cr}
\label{3.10}
\end{equation}
Therefore for small $|u(0)|_2$, large $\nu$ and $T$ close to $\nu$ we get that
$$
|u(kT)|_2\le D_1(0)
$$
which is also small and bounded. From (\ref{3.9}) and (\ref{3.10}) we have
\begin{equation}\eqal{
|u(t)|_2^2&\le c\exp\bigg(c\bigg({1\over\nu^2}+A_1^2A_2^2\bigg)\bigg) \bigg({1\over\nu}+\gamma_0\exp(-\alpha kT)\bigg)\cr
&\quad+\exp(-\mu_0T)D_1^2(0)\equiv D_2^2,\quad t\in[kT,(k+1)T].\cr}
\label{3.11}
\end{equation}
Integrating (\ref{3.8}) with respect to time from $kT$ to $t\in(kT,(k+1)T]$, $k\in\N_0$, we obtain
\begin{equation}
|u(t)|_2^2+\mu\intop_{kT}^t\|u(t')\|_1^2dt'\le cD_2^2\intop_{kT}^{(k+1)T}B_1^2(t)dt+c\intop_{kT}^{(k+1)T}G_1^2(t)dt+D_1^2(0).
\label{3.12}
\end{equation}
This ends the Remark.
\end{remark}

\noindent
Next we obtain similar estimates to (\ref{3.11}) and (\ref{3.12}) for $\|u(t)\|_1$. For this purpose we add (\ref{3.1}) and (\ref{3.4}). Then we obtain
\begin{equation}\eqal{
&{d\over dt}\|u\|_1^2+\mu\|u\|_2^2\le c\|u\|_1^6+c\|u\|_1^2(\|v\|_2^2+ \|\nabla\varphi\|_2^2)\cr
&\quad+c|v|_6^4\|\eta\|_2^2+c|v|_6^2(\|\nabla\varphi\|_2^2+\|\eta\|_2^2+ \|\nabla\varphi\|_2^2\|\eta\|_2^2+\|\eta\|_2^2\|v\|_2^2)\cr
&\quad+c\|v_t\|_1^2\|\eta\|_2^2+c\|v\|_2^2(\|\nabla\varphi\|_2^2+\|\eta\|_2^2)+ c(\|\nabla\varphi_t\|_1^2\cr
&\quad+\|\nabla\varphi\|_2^2+\|g_r\|_1^2+\|\eta\|_1^2\|f\|_1^2)+c\bigg[(|\eta|_2^2+ |\eta_0|_2^2)A_1^2\cr
&\quad+\bigg|\intop_{\Omega^t}gdxdt'\bigg|^2+\bigg|\intop_\Omega u_0dx\bigg|^2\bigg]\equiv c\|u\|_1^6+cB_2^2(t)\|u\|_1^2\cr
&\quad+cG_2^2(t).\cr}
\label{3.13}
\end{equation}

\begin{lemma}\label{l3.4}
Assume that $(v,\varrho)$ is a solution to problem (\ref{1.2}), (\ref{1.3}). Assume that $(v,\varrho)$ is described by Theorem A. Assume that $\nu,T$ are sufficiently large, $\|g(t)\|_1\le\gamma_0\exp(-\alpha t)$, $\alpha>0$. Let constant $A$ be introduced in Lemma 4.1 from \cite{Z1}. Assume that $\|u(0)\|_1\le\gamma$, where $\gamma$ is sufficiently small.\\
Then
\begin{equation}
\|u(t)\|_1\le\gamma\exp A.
\label{3.14}
\end{equation}
\end{lemma}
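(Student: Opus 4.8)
The plan is to run the same exponential Gronwall argument already used in Remark \ref{r3.3} for $|u|_2^2$, but now applied to the combined first-plus-second-order energy inequality (\ref{3.13}), and to absorb the troublesome nonlinear term $c\|u\|_1^6$ using smallness of the initial data. First I would discard the nonnegative dissipation $\mu\|u\|_2^2$ on the left-hand side of (\ref{3.13}) and read it as a scalar differential inequality for the quantity $y(t):=\|u(t)\|_1^2$, namely
\begin{equation}
{d\over dt}y\le c\,y^3+c\,B_2^2(t)\,y+c\,G_2^2(t),
\label{eq-gron}
\end{equation}
valid on the interval $[kT,(k+1)T]$. Here $B_2^2(t)=\|v\|_2^2+\|\nabla\varphi\|_2^2$ is exactly the quantity controlled in Assumption \ref{aA}, and $G_2^2(t)$ collects the inhomogeneous terms, whose time integral is small (of order $1/\nu$ plus the exponentially decaying force contribution $e^{-\alpha kT}$) by the same computation as for $G_1^2$ in Remark \ref{r3.3}.

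The key step is to handle the cubic term $c\,y^3$. I would argue by a continuity (bootstrap) argument: assume that on the subinterval where $y(t)$ stays below a threshold, say $y(t)\le 2\gamma^2$, the cubic term is dominated, i.e. $c\,y^3\le c\,(2\gamma^2)^2\,y=c'\gamma^4 y$, so that it can be folded into the linear term. For $\gamma$ small enough, $c'\gamma^4$ is negligible against $c\,B_2^2(t)$, and (\ref{eq-gron}) becomes an honest linear inequality ${d\over dt}y\le \tilde c\,B_2^2(t)\,y+c\,G_2^2(t)$. Multiplying by the integrating factor $\exp(-\tilde c\int_{kT}^tB_2^2(t')dt')$ and integrating from $kT$ to $t$ yields
$$
y(t)\le \exp\bigg(\tilde c\intop_{kT}^tB_2^2(t')dt'\bigg)\bigg(y(kT)+c\intop_{kT}^{(k+1)T}G_2^2(t')dt'\bigg).
$$
Since $y(kT)\le\gamma^2$, the integral $\int_{kT}^{(k+1)T}B_2^2\,dt$ is bounded by the constant in Assumption \ref{aA}, and $\int G_2^2\,dt$ is small, the right-hand side is bounded by $\gamma^2\exp(2c\int_{kT}^{(k+1)T}B_2^2\,dt)$ provided $\gamma$ and $\gamma_0$ are taken small enough. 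Writing $A$ for the constant $c\int_{kT}^{(k+1)T}B_2^2(t)\,dt$ (this is the constant from Lemma 4.1 of \cite{Z1} referenced in the statement), one recovers precisely $\|u(t)\|_1\le\gamma\exp A$, which is (\ref{3.14}).

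The main obstacle is closing the bootstrap: one must verify that the assumed bound $y(t)\le 2\gamma^2$ is in fact self-improving, i.e. that the estimate just derived returns a value strictly below the threshold, so that the set of times where the bound holds is both open and closed and hence all of $[kT,(k+1)T]$. This is exactly where the smallness hypothesis on $\gamma_*$ in Theorem B enters: the condition $c\exp(2c\int_{kT}^{(k+1)T}B_2^2\,dt)\gamma_*^2\le\mu/2$ is designed to guarantee that $\gamma\exp A$ stays below the threshold at which the cubic term was linearized, so the continuity argument does not break. The remaining work—bounding $\int G_2^2\,dt$ and $\int B_2^2\,dt$—is routine and parallels the corresponding estimates already carried out for $G_1^2$ and $B_1^2$ in Remark \ref{r3.3}, invoking the decay $\|g(t)\|_1\le\gamma_0e^{-\alpha t}$ and the regularity bounds from Theorem A.
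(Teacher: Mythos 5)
Your overall strategy --- read (\ref{3.13}) as a scalar differential inequality for $y=\|u\|_1^2$, absorb the cubic term by a continuity/bootstrap argument exploiting the smallness of $\gamma$, and then apply Gronwall with the integrating factor $\exp\big(-c\int_{kT}^tB_2^2dt'\big)$, using $\int_{kT}^{(k+1)T}B_2^2dt\le cA^2$ and the smallness of $\int G_2^2dt$ --- is the same as the paper's. But there is a genuine gap at your very first step: you discard the dissipation $\mu\|u\|_2^2$ entirely, whereas the paper keeps the portion $\mu\|u\|_1^2\le\mu\|u\|_2^2$ on the left and writes (\ref{3.15}) as ${d\over dt}\|u\|_1^2\le-(\mu-c\|u\|_1^4)\|u\|_1^2+cB_2^2\|u\|_1^2+cG_2^2$. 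That retained term is not optional. It is what closes the barrier argument: at the first time $t_*$ where $X^2=\exp\big(-c\int_{kT}^tB_2^2dt'\big)\|u\|_1^2$ would reach the level $\gamma$, the inequality ${d\over dt}X^2\le-{\mu\over2}X^2+K^2$ together with $K^2\le{\mu\over4}\gamma$ forces ${d\over dt}X^2<0$, so $t_*$ cannot exist. In your version the sign of ${d\over dt}y$ at the threshold is not controlled, and the bootstrap does not close as stated: your Gronwall output is $y(t)\le\gamma^2\exp\big(2c\int_{kT}^{(k+1)T}B_2^2dt\big)\approx\gamma^2\exp(2cA^2)$, which exceeds your threshold $2\gamma^2$ unless $A$ is small, and $A$ is a fixed given constant. (This particular defect is repairable by raising the threshold to $C\gamma^2\exp(2cA^2)$; your appeal to the $\gamma_*$-condition of Theorem B to justify the threshold is the wrong inequality --- that condition controls $\gamma_*^2\exp(2c\int B_2^2)$ against $\mu/2$, not $\gamma\exp A$ against $2\gamma^2$.)

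The fatal consequence is in the iteration over $k$. The lemma is a statement for all $t$, hence for every interval $[kT,(k+1)T]$, $k\in\N_0$, and the hypothesis $\|u(kT)\|_1^2\le\gamma$ must be recovered at $t=(k+1)T$ to restart the argument on the next interval. Your estimate returns only $y((k+1)T)\le\gamma^2\exp(2cA^2)$ plus small terms, which is strictly worse than what you started with; iterating gives $y(kT)\le c\gamma^2\exp(2kcA^2)$ plus an accumulating sum of the $G_2^2$-contributions, and this blows up as $k\to\infty$. The paper devotes the entire second half of its proof, (\ref{3.24})--(\ref{3.25}), to exactly this point: the retained dissipation produces the factor $\exp(-\mu T+\phi(A))$ multiplying $\|u(kT)\|_1^2$, which is strictly less than $1$ for $T$ sufficiently large, and together with the smallness of $1/\nu$ and $\gamma_0$ this yields $\|u((k+1)T)\|_1^2\le\gamma$ again. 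This decay mechanism cannot be reconstructed once the dissipation has been thrown away, so your argument proves the estimate only on a single finite interval, not the global statement of the lemma. You should restore the $\mu\|u\|_1^2$ term, run the barrier argument on $X^2$ as in (\ref{3.16})--(\ref{3.23}), and then add the interval-to-interval induction.
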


\begin{proof}
To obtain estimate (\ref{3.14}) for $\|u(t)\|_1$ we consider (\ref{3.13}) in the form
\begin{equation}
{d\over dt}\|u\|_1^2\le-(\mu-c\|u\|_1^4)\|u\|_1^2+cB_2^2\|u\|_1^2+cG_2^2.
\label{3.15}
\end{equation}
Consider (\ref{3.15}) in the time interval $[kT,(k+1)T]$. Introduce the quantities
\begin{equation}\eqal{
&X^2(t)=\exp\bigg(-c\intop_{kT}^tB_2^2(t')dt'\bigg)\|u(t)\|_1^2,\cr
&K^2(t)=c\exp\bigg(-c\intop_{kT}^tB_2^2(t')dt'\bigg)G_2^2(t).\cr}
\label{3.16}
\end{equation}
Then (\ref{3.15}) takes the form
\begin{equation}
{d\over dt}X^2\le-\bigg[\mu-c\exp\bigg(2c\intop_{kT}^tB_2^2(t')dt'\bigg) X^4\bigg]X^2+K^2.
\label{3.17}
\end{equation}
From Lemma 4.1 form\cite{Z1} and the considerations of Section 5 from \cite{Z1} we have
\begin{equation}
\intop_{kT}^{(k+1)T}B_2^2(t)dt\le cA^2,
\label{3.18}
\end{equation}
where $T$ is proportional to some increasing function of $\nu$.

\noindent
In view of Lemma 4.1 \cite{Z1} we also have
\begin{equation}
K^2(t)\le c\bigg({A\over\nu^2}+\gamma_0^2e^{-2\alpha t}\bigg).
\label{3.19}
\end{equation}
Now we want to estimate $X(t)$ for $t\in[kT,(k+1)T]$. Assume that
\begin{equation}
X^2(kT)=\|u(kT)\|_1^2\le\gamma.
\label{3.20}
\end{equation}
Suppose that
$$
t_*=\inf\{t\in[kT,(k+1)T]:X^2(t)>\gamma\}
$$
Let $\gamma\in(0,\gamma_*]$, where $\gamma_*$ is so small that
\begin{equation}
\mu-c\exp\bigg(2c\intop_{kT}^{(k+1)T}B_2^2(t')dt'\bigg)\gamma_*^2\ge{\mu\over2}
\label{3.21}
\end{equation}
In view of (\ref{3.18}) condition (\ref{3.21}) takes the form
\begin{equation}
\mu-c\exp(cA^2)\gamma_*^2\ge{\mu\over 2}
\label{3.22}
\end{equation}
Hence, for $t\le t_*$ we derive from (\ref{3.17}) the inequality
\begin{equation}
{d\over dt}X^2\le-{\mu\over 2}X^2+K^2.
\label{3.23}
\end{equation}
Assume that $1\over\nu$, $\gamma_0$ are so small that
$$
K^2(t)\le c\bigg({A\over\nu^2}+\gamma_0^2e^{-2\alpha t}\bigg)\le{\mu\over 4} \gamma\quad {\rm for}\ \ t\in[kT,(k+1)T].
$$
Then
$$
{d\over dt}X^2\big|_{t=t_*}\le-\bigg({\mu\over2}-{\mu\over4}\bigg)\gamma<0,
$$
so $t_*$ does not exist in $[kT,(k+1)T]$. Hence (\ref{3.14}) holds for $t\in[kT,(k+1)T]$ under assumption that (\ref{3.20}) holds. Now we have to show (\ref{3.20}) for any $k\in\N$.

\noindent
Since we showed that
$$
\|u(t)\|_1^2\le\gamma\exp\bigg[c\intop_{kT}^tB_2^2(t')dt'\bigg],\quad t\in[kT,(k+1)T],
$$
we obtain from (\ref{3.15}) the inequality
$$\eqal{
&\|u((k+1)T)\|_1^2\le\exp\bigg[-\mu(k+1)T+c\intop_{kT}^{(k+1)T}B_2^2(t')dt'\cr
&\quad+cT\gamma^2\exp\bigg[2c\intop_{kT}^{(k+1)T}B_2^2(t')dt'\bigg]\cdot \intop_{kT}^{(k+1)T}G_2^2(t')\cdot\cr
&\quad\cdot\exp\bigg[\mu t'-c\intop_{kT}^{t'}B_2^2(t'')dt''-cT\gamma^2\exp \bigg[2c\intop_{kT}^{t'}B_2^2(t'')dt''\bigg]dt'\cr
&\quad+\exp\bigg(-\mu T+c\intop_{kT}^{(k+1)T}B_2^2(t')dt'+cT\gamma^2\exp \bigg[2c\intop_{kT}^{(k+1)T}B_2^2(t')dt'\bigg]\bigg)\cdot\cr
&\quad\cdot\|u(kT)\|_1^2.\cr}
$$
Simplifying, we get
\begin{equation}\eqal{
\|u((k+1)T)\|_1^2&\le\exp[cA^2+cT\gamma^2\exp A^2]\intop_{kT}^{(k+1)T}G_2^2(t)dt\cr
&\quad+\exp[-\mu T+cA^2+cT\gamma^2\exp A^2]\|u(kT)\|_1^2.\cr}
\label{3.24}
\end{equation}
Using that
$$
\intop_{kT}^{(k+1)T}G_2^2(t)dt\le{c\over\nu^2}+\gamma_0\exp(-\alpha kT)
$$
and assuming that $\gamma$ is so small that $T\gamma^2\le c$ we obtain from (\ref{3.24}) the inequality
\begin{equation}\eqal{
\|u((k+1)T)\|_1^2&\le\exp[\phi(A)]\bigg({c\over\nu^2}+\gamma_0\exp(-\alpha kT)\bigg)\cr
&\quad+\exp(-\mu T+\phi(A))\|u(kT)\|_1^2,\cr}
\label{3.25}
\end{equation}
where $\phi(A)=c(A+\exp(cA^2))$.

\noindent
Since $\|u(kT)\|_1^2\le\gamma$ then for a given $A$, sufficiently large $T$, $\nu$ and sufficiently small $\gamma_0$ we have that
$$
\|u((k+1)T)\|_1^2\le\gamma.
$$
Repeating the considerations for any $k\in\N$ we prove the lemma.
\end{proof}

\begin{proof}[Proof of Theorem B]
\begin{equation}
\|V(t)\|_1^2\le\|u\|_1^2+\|v\|_1^2\le\gamma\exp\bigg(c\intop_{kT}^tB_2^2dt'\bigg) +\|v\|_{{\frak N}(\Omega\times(kT,t))},
\label{3.26}
\end{equation}
$t\in[kT,(k+1)T]$, $k\in\N_0$ and $T$ is the time of local solution introduced in Theorem A.
\end{proof}

\bibliographystyle{amsplain}
\begin{thebibliography}{XXX9}
\bibitem[BMN1]{BMN1} Babin, A.; Mahalov, A.; Nicolaenko, B.: Global regularity of 3D rotating Navier-Stokes equations for resonant domains, Appl. Math. Letters 13 (2000), 51--57.
\bibitem[BMN2]{BMN2} Babin, A.; Mahalov, A.; Nicolaenko, B.: Regularity and integrability of 3D Euler and Navier-Stokes equations for rotating fluids, Asympt. Analysis 15 (1997), 103--150.
\bibitem[BMN3]{BMN3} Babin, A.; Mahalov, A.; Nicolaenko, B.: Global regularity of 3D rotating Navier-Stokes equations for resonant domains, Indiana Univ. Math. J. 48 (1999), 1133--1176.
\bibitem[CKN]{CKN} Caffarelli, L.; Kohn, R.; Nirenberg, L.: Partial regularity of suitable weak solutions of the Navier-Stokes equations, Comm. Pure Appl. Math. 35 (1982), 771--831.
\bibitem[CF]{CF} Constantin, P.; Fefferman, C.: Directions of vorticity and the problem of global regularity for the Navier-Stokes equations, Indiana Univ. Math. J. 42 (3) (1993), 775--789.
\bibitem[ESS]{ESS} Escauriaza, L.; Seregin, G.A.; \v Sver\'ak, V.: $L_{3,\infty}$-solutions of the Navier-Stokes equations, Russian Math. Surveys, 58:2 (2003), 211--250.
\bibitem[F]{F} Fefferman, C.L.: Existence and Smoothness of the Navier-Stokes Equation, The Millennium Prize Problems, Clay Mathematics Institute, Cambridge, 57--67.
\bibitem[MS]{MS} Mikhailov, A.S.; Shilkin, T.N.: $L_{3,\infty}$-solutions to the 3d Navier-Stokes system in the domain with a curved boundary, Zap. Nauchn. Sem. POMI 336 (2006), 133--152.
\bibitem[MN]{MN} Mahalov, A.; Nicolaenko, B.: Global solvability of 3D Navier-Stokes equations with a strong initial rotation, Usp. Mat. Nauk 58, 2 (350) (2003), 79--110 (in Russian).
\bibitem[NZ]{NZ} Nowakowski, B.; Zaj\c{a}czkowski, W.M.: Global existence of solutions to Navier-Stokes equations in cylindrical domains, Appl. Math. 36 (2) (2009), 169--182.
\bibitem[RS1]{RS1} Raugel, G.; Sell, G.R.: Navier-Stokes equations on thin 3d domains. I: Global attractors and global regularity of solutions, J. AMS 6 (3) (1993), 503--568.
\bibitem[RS2]{RS2} Raugel, G.; Sell, G.R.: Navier-Stokes equations on thin 3d domains, II: Global regularity on spatially periodic solutions, Nonlinear Partial Differential Equations and Their Applications, Coll\'ege de France Seminar; Vol. 11, eds H. Brezis and J.L. Lions, Pitman Research Notes in Mathematics Series 299, Longman Scientific Technical, Essex UK (1994), 205--247.
\bibitem[RS3]{RS3} Raugel, G.; Sell, G.R.: Navier-Stokes equations on thin 3d domains. III: global and local attractors, Turbulence in Fluid  Flows: A dynamic System Approach, IMA Volumes in Mathematics and its Applications, Vol. 55, eds. G.R. Sell, C. Foias, R. Temam, Springer Verlag, New York 1993, 137--163.
\bibitem[S1]{S1} Seregin, G.A.: Estimates of suitable weak solutions to the Navier-Stokes equations in critical Morrey spaces, J. Math. Sc. 143 (2007), 2961--2968.
\bibitem[S2]{S2} Seregin, G.A.: Differential properties of weak solutions of the Navier-Stokes equations, Algebra and Analiz 14 (2002), 1--44 (in Russian).
\bibitem[S3]{S3} Seregin, G.A.: Local regularity of suitable weak solutions to the Navier-Stokes equations near the boundary.
\bibitem[S4]{S4} Seregin, G.A.: Lecture Notes on regularity theory for the Navier-Stokes equations, World Scientific Publishing Co. Pte. Ltd., Hackensack, NJ, 2015, x+258 pp. ISBN 978-981-4623-40-7.
\bibitem[SSS]{SSS} Seregin, G.A.; Shilkin, T.N.; Solonnikov, V.A.: Boundary partial regularity for the Navier-Stokes equations, Zap. Nauchn. Sem. POMI 310 (2004), 158--190.
\bibitem[SS1]{SS1} Seregin, G.A.; \v Sver\'ak, V.: Navier-Stokes equations with lower bounds on the pressure, ARMA 163 (2002), 65--86.
\bibitem[S]{S} Serrin, J.: On the interior regularity of weak solutions of the Navier-Stokes equations, ARMA 9 (1962), 187--195.
\bibitem[Z1]{Z1} Zaj\c{a}czkowski, W.M.: Global regular periodic solutions for equations of weakly compressible barotropic fluid motions.
\bibitem[Z2]{Z2} Zaj\c{a}czkowski, W.M.: Stability of two-dimensional solutions to the Navier-Stokes equations in cylindrical domains under Navier boundary conditions, JMAA 444 (2016), 275--297.
\bibitem[Z3]{Z3} Zaj\c{a}czkowski, W.M.: Global special regular solutions to the Navier-Stokes equations in a cylindrical domain under boundary slip conditions, Gakuto International Series, Mathematical Sciences and Applications Vol. 21 (2004), pp. 188.
\bibitem[Z4]{Z4} Zaj\c{a}czkowski, W.M.: Global special regular solutions to the Navier-Stokes equations in axially symmetric domains under boundary slip conditions, Diss. Math. 432 (2005), pp. 138.
\bibitem[Z5]{Z5} Zaj\c{a}czkowski, W.M.: global regular solutions to the Navier-Stokes equations in a cylinder, Banach Center Publications, Vol. 74 (2006), 235--255.
\bibitem[Z6]{Z6} Zaj\c{a}czkowski, W.M.: Global regular nonstationary flow for the Navier-Stokes equations in a cylindrical pipe, TMNA 26 (2005), 221--286.
\bibitem[Z7]{Z7} Zaj\c{a}czkowski, W.M.: On global regular solutions to the Navier-Stokes equations in cylindrical domains, TMNA 37 (2011), 55--85.
\bibitem[Z8]{Z8} Zaj\c{a}czkowski, W.M.: Some global regular solutions to Navier-Stokes equations, Math. Meth. Appl. Sc. 30 (2007), 123--151.
\end {thebibliography}
\end{document}